	\newcommand{\red}[1]{\textcolor{red}{#1}}
\newcommand{\FF}{\mathcal F}
\newcommand{\phimapsto}{{\overset{\phi}{\mapsto}}}
\DeclareMathOperator\Des{Des}
\DeclareMathOperator\cDes{cDes}
\def\ZZ{{\mathbb Z}}
\def\M{{\mathbf M}}
\def\cS{{\mathcal S}}
 \def\su{\textsc{U}}
 \def\sd{\textsc{D}}
 \def\sl{\textsc{L}}
\def\sp{\textsc{P}}
\newcommand{\younganddescent}[2]{
    \text{
        \begin{tabular}{l}
            $\young(#1)$ \\[10px]
            #2
        \end{tabular}
}}
\newcommand{\Jdt}{\operatorname{jdt}}
\def\SYT{{\rm SYT}}
\def\Stat{{\rm Stat}}
\def\cStat{{\rm cStat}}
\def\pro{{\rm pro}}
\theoremstyle{plain}
\newtheorem{theorem}{Theorem}[section]
\newtheorem{proposition}[theorem]{Proposition}
\newtheorem{lemma}[theorem]{Lemma}
\newtheorem{corollary}[theorem]{Corollary}
\newtheorem{problem}[theorem]{Problem}
\theoremstyle{definition}
\newtheorem{definition}[theorem]{Definition}
\newtheorem{example}[theorem]{Example}
\newtheorem{remark}[theorem]{Remark}
\begin{document}
	
	\title[Cyclic descents for Motzkin paths]
	{Cyclic descents for Motzkin paths}



	\author{Bin Han}
	\address{Department of Mathematics, Royal institute of Technology (KTH), SE 100-44 Stockholm, Sweden}
	\email{binhan@kth.se, han.combin@hotmail.com}
%


	\date{\today} 
	
		\thanks{Supported by the Israel Science Foundation, grant no.\ 1970/18 as a
postdoctoral fellow at Bar Ilan University, during which most of the work was done. Currently supported by a Starting Grant (No. 2019-05195) from Vetenskapsr\aa{}det (The Swedish Research Council)} 
	

	\maketitle	
	
	\begin{abstract}
A notion of cyclic descents on standard Young tableaux (SYT) of rectangular shape was introduced by Rhoades, and extended to certain skew shapes by Adin, Elizalde and Roichman. The cyclic descent set restricts to the usual descent set when the largest value is ignored, and has the property that the number of SYT of a given shape with a given cyclic descent set D is invariant under cyclic shifts of the entries of D.  Adin, Reiner and Roichman proved that a skew shape has a cyclic descent map if and only if it is not a connected ribbon. Unfortunately, their proof is nonconstructive. Recently Huang constructed an explicit cyclic descent map for all shapes where this is possible.

In the earlier version of Adin, Elizalde and Roichman's paper, they asked to find statistics on combinatorial objects which are equidistributed with cyclic descents on SYT of given shapes. In this paper, we explicitly describe cyclic descent sets for Motzkin paths,  which are equidistributed with
cyclic descent sets of SYT for three-row shapes. Moreover, in light of Stanley's shuffling theorem, we give a bijective proof of the shuffling property of descent statistics for Motzkin paths.
	\end{abstract}

	{\bf MSC}: 05A19, 05E10
	
	{\bf Keywords}: Cyclic descent, Motzkin path, Standard Young Tableaux, Bijection

	\tableofcontents
	
	\section{Introduction}
	
	
	
	Let $\cS_n$ denote the symmetric group on $[n]=\{1,\ldots,n\}$.
For a permutation $\pi\in \cS_n$, the {\em descent set} of $\pi$ is defined by
\begin{equation*}
    \Des(\pi) := \{i\in [n-1]: \pi(i) > \pi(i+1)\}.
\end{equation*}
 The notion of {\em cyclic descent} was introduced by Cellini \cite{Cellini} and 
further studied by Dilks, Petersen, Stembridge~\cite{DPS}.
The cyclic descent set is defined, for $\pi \in \cS_n$, by
\begin{equation*}
    \cDes(\pi):=
    \begin{cases}
        \Des(\pi) \cup \{n\} & \pi(n) > \pi(1), \\
        \Des(\pi) & \pi(n) < \pi(1).
    \end{cases}
\end{equation*}

Moreover, let $\phi: \cS_n \mapsto \cS_n$ be the cyclic rotation function,
such that
\begin{equation*}
    (\phi \pi) (i) = \pi (i-1)
\end{equation*}
for each $\pi \in \cS_n$, where indices are taken modulo $n$.
Then $\phi$ has the property that for any $\pi \in \cS_n$,
\begin{equation}\label{prop:+1}
    \cDes(\pi)+1 = \cDes(\phi \pi),
\end{equation}
where $\cDes(\pi)+1$ denotes the set obtained from $\cDes(\pi)$
by adding $1\pmod n$ to each element. Property~\eqref{prop:+1} of the bijection $\phi$ implies that the multiset $\{\{\cDes(\pi) | \pi \in \cS_n\}\}$ is rotation-invariant modulo $n$.

%
%


\smallskip
	
	Rhoades introduced a cyclic descent notion for standard Young tableaux (SYT), but only for rectangular shapes~\cite{Rhoades}; the definition of SYT is postponed to Section~\ref{sec:SYT}.
	A generalization of this notion to 
	other skew shapes, was introduced in~\cite{AER, ARR, Huang}.

	
	The following definition of cyclic extension is due to Adin, Reiner and Roichman.
	\begin{definition}\label{def:cDes}\cite[Definition~2.1]{ARR}
		Let $\FF$ be a finite set, equipped with a {\em descent map} 
		$\Des: \FF \longrightarrow 2^{[n-1]}$. 
		A {\em cyclic extension} of $\Des$ is
		a pair $(\cDes,p)$, where 
		$\cDes: \FF \longrightarrow 2^{[n]}$ is a map 
		and $p: \FF \longrightarrow \FF$ is a bijection,
		satisfying the following axioms:  for all $F$ in  $\FF$,
		\[
		\begin{array}{rl}
		\text{(extension)}   & \cDes(F) \cap [n-1] = \Des(F),\\
		\text{(equivariance)}& \cDes(p(F))  = \cDes(F)+1, {\rm with\,\, indices\,\, taken\,\, modulo} \,\,$n$.\\
		\text{(non-Escher)}  & \varnothing \subsetneq \cDes(F) \subsetneq [n].\\
		\end{array}
		\]
	\end{definition}

		\medskip
	

\smallskip


In an earlier version of Adin, Elizalde and Roichman~\cite{AER}, they posed the following problem.
\begin{problem}
Find statistics on combinatorial objects which are equidistributed with cyclic descents
on SYT of given shapes.
\end{problem}
Motivated by the above problem, we aim to find cyclic descent sets for Motzkin paths in this paper.

A Motzkin path of length $n$ is a lattice path in the integer plane $\ZZ\times\ZZ$ from $(0,0)$ to $(n, 0)$ which consists of three kinds of steps $\su=(1, 1)$, $\sd=(1, -1)$ and $\sl=(1, 0)$ and never passes below the $x$-axis. A Dyck path is a Motzkin path without $\sl$ steps.  Recently, Adin, Elizalde and Roichman \cite[Section~5]{AER} provided an explicit combinatorial description of a cyclic descent extension  for Dyck paths. It is natural to consider a cyclic descent extension for Motzkin paths.

Let $\M_n$ denote the set of Motzkin paths with $n$ steps. A {\em return} (resp. {\em start}) of a path in $\M_n$ is a $\sd$ (resp. $\su$) step that ends (resp. starts) on the $x$-axis. 

Consider the order $\su>\sd>\sl$ on Motzkin path steps. This order induces a descent map $\Des: \M_n\mapsto 2^{[n-1]}$ defined as follows.

\begin{definition}\label{def:motzdes}
The step $i\in[n]$ is a  {\em descent} of a path $M\in\M_n$ if one of the following conditions holds:
\begin{enumerate}
\item
 The $i$th step of $M$ is a $\su$ followed by a $\sd$.
\item
 The $i$th step of $M$ is a $\su$ followed by a $\sl$.
 \item
  The $i$th step of $M$ is a $\sd$ followed by a $\sl$.
\end{enumerate}
Let $\Des\,(M)$ denote the set of descents of $M$.
\end{definition}

\begin{remark}
\begin{enumerate}
\item
For Motzkin paths without $\sl$ steps, descent sets in Definition~\ref{def2:motzdes} coincide with  those of  Dyck paths, see~\cite[Definition~5.1]{AER}.

\item
Let 
$T_3(n)$ denote the set of SYTs with $n$ entries and at most $3$ rows. Regev~\cite{Reg81} proved that $|T_3(n)|$ is equal to the $n$-th Motzkin number.
Eu~\cite{Eu10} provided a bijection between $T_3(n)$ and $\M_n$, but it does not preserve the descent set in general.  We give a bijection between  $T_3(n)$ and $\M_n$ preserving the descent set. 

\item
We construct a cyclic extension of this descent map. In fact, we achieve this for all of the orderings $\su>\sd>\sl$, $\su>\sl>\sd$ and $\sl>\su>\sd$.

\end{enumerate}
\end{remark}

For a finite set (alphabet) $A$ of size $a$, let $\cS_A$ be the set of all bijections $u : [a] \to A$, viewed as words $u =(u_1,\ldots, u_a)$. 
If $A=[a]$ then $\cS_A$ is the symmetric group $\cS_a$, whose elements are permutations of $[a]$.

Richard Stanley's theory of P-partitions~\cite{Sta72} implies that the descent set statistic has a remarkable property related to shuffles: for any two permutations $\pi$ and $\sigma$ on disjoint alphabet $s$, the descent set over all shuffles of $\pi$ and $\sigma$ depends only on $\Des(\pi)$, $\Des(\sigma)$,  and the lengths of $\pi$ and $\sigma$~\cite[Exercise~3.161]{EC1}. The following classical result is due to Stanley.

\begin{lemma}(\cite[Exercise~3.161]{EC1} and \cite[Section~2.4]{GZ18})\label{lem:shuffle}
Let A and B be two disjoint sets of integers. For each $u\in \cS_A$ and $v\in \cS_B$, the distribution of the descent set over all shuffles of $u$ and $v$ depends only on $\Des(u)$, $\Des(v)$ and the lengths of $u$ and $v$.
\end{lemma}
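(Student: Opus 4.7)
The plan is to invoke Gessel's fundamental quasisymmetric functions and the $P$-partition shuffle identity. For any word $w$ of length $n$ with descent set $D = \Des(w) \subseteq [n-1]$, let
\[
F_{n,D}(x_1, x_2, \ldots) \;:=\; \sum_{\substack{1 \le i_1 \le i_2 \le \cdots \le i_n \\ i_j < i_{j+1} \text{ if } j \in D}} x_{i_1} x_{i_2} \cdots x_{i_n}.
\]
The function $F_{n,D}$ depends only on $n$ and $D$, and the family $\{F_{n,D}\}_{D \subseteq [n-1]}$ is linearly independent in the ring of quasisymmetric functions.

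The core of the argument is the Stanley--Gessel shuffle identity
\[
F_{n, \Des(u)} \cdot F_{m, \Des(v)} \;=\; \sum_{w} F_{n+m, \Des(w)},
\]
where the sum runs over all shuffles $w$ of $u$ and $v$ (words on $A \sqcup B$ whose subwords in $A$ and in $B$ equal $u$ and $v$ respectively). To establish this, I would interpret the left-hand product as enumerating pairs of weakly increasing functions $f : [n] \to \ZZ_{>0}$ and $g : [m] \to \ZZ_{>0}$ subject to the respective descent constraints, and then merge each pair into a single weakly increasing sequence on $[n+m]$. The disjointness $A \cap B = \emptyset$, together with the total order on $A \cup B$, supplies a canonical tie-breaking rule that identifies the merged sequence with the data of a unique shuffle $w$ and a weakly increasing function whose strict-increase positions record $\Des(w)$.

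Once the shuffle identity is in hand, the lemma follows immediately: the left-hand side depends only on $n$, $m$, $\Des(u)$, $\Des(v)$, and by linear independence of $\{F_{n+m,E}\}_{E \subseteq [n+m-1]}$ the multiset of descent sets $\{\!\{\Des(w) : w \text{ is a shuffle of } u \text{ and } v\}\!\}$ is determined by the same data. The main obstacle is the shuffle identity itself: the delicate point is the handling of ties across the two source sequences, and one must verify that the tie-breaking convention induced by the total order on $A \sqcup B$ simultaneously tracks the descent structure and recovers the shuffle uniquely. This is a classical but not entirely routine verification within $P$-partition theory; a fully bijective alternative would construct, for any two input pairs $(u,v)$ and $(u',v')$ with matching lengths and descent sets, an explicit descent-preserving bijection between their shuffle sets, which is a more direct route that avoids generating functions altogether.
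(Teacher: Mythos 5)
The paper does not give a proof of this lemma at all---it is quoted as a classical result, attributed to \cite[Exercise~3.161]{EC1} and \cite[Section~2.4]{GZ18}, and the surrounding text explicitly defers to ``Stanley's theory of $P$-partitions.'' Your proposal is correct and is essentially the standard argument contained in those references: the shuffle identity $F_{n,\Des(u)}\cdot F_{m,\Des(v)}=\sum_{w}F_{n+m,\Des(w)}$ for fundamental quasisymmetric functions (proved via $(P,\omega)$-partitions of a disjoint union of two labeled chains, whose linear extensions are exactly the shuffles) combined with the linear independence of the $F_{n+m,E}$, so it matches the route the paper intends.
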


Every Motzkin path is a shuffle of a Dyck path and a horizontal path. By suitable labelings, monotone on each type of step separately, we can view them as permutations. Applying Lemma~\ref{lem:shuffle}, we obtain the following proposition immediately.

\begin{proposition}\label{prop:1.6}
The descent distributions on all Motzkin paths are the same for the orderings $\su>\sd>\sl$, $\su>\sl>\sd$ and $\sl>\su>\sd$.
\end{proposition}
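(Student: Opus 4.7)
The plan is to apply Stanley's shuffling lemma (Lemma~\ref{lem:shuffle}) after encoding each Motzkin path as a shuffle of two labelled words in a way that makes the three descent statistics all coincide with the ordinary word descent. Every $M\in\M_n$ decomposes uniquely into its Dyck subpath $D$ of length $2k$ (the subsequence of $\su$ and $\sd$ steps, in order) and its horizontal subpath $H$ of length $n-2k$ (the subsequence of $\sl$ steps); conversely, the Motzkin paths with this decomposition are exactly the shuffles of $D$ and $H$. Hence I partition $\M_n$ by pairs $(D,H)$ and aim to show that the descent distribution over the corresponding class of shuffles is the same for all three orderings.

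First, for each of the three orderings, I choose a labelling of the $n$ steps by distinct integers in $[n]$ such that (a) labels increase within each fixed type of step as one reads along the path, and (b) the relative sizes of labels across types reflect the chosen total order on $\{\su,\sd,\sl\}$. For the order $\su>\sd>\sl$, for example, I label the $\sl$ steps $1,\ldots,n-2k$, the $\sd$ steps $n-2k+1,\ldots,n-k$, and the $\su$ steps $n-k+1,\ldots,n$, each block filled in position order; the other two orderings call for the analogous assignments. This produces from $D$ a word $u$ and from $H$ a word $v$ on disjoint alphabets. A short case analysis over the six possible adjacent step-pairs confirms that for every shuffle $M$ of $D$ and $H$, the descent set of the corresponding shuffle of $u$ and $v$ (as a word) coincides with the Motzkin descent set of $M$ in the sense of Definition~\ref{def:motzdes} under the given order.

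The crucial observation is that $\su>\sd$ holds in all three orderings, so the Motzkin descent set of the pure Dyck word $D$ is always the set of positions at which $\su$ is immediately followed by $\sd$, independent of the ordering; and $\Des(H)=\varnothing$ trivially under all three orderings. Consequently $\Des(u)$, $\Des(v)$, $|u|$ and $|v|$ do not depend on which of the three orderings was used to build the labelling. Lemma~\ref{lem:shuffle} then says that the descent distribution over all shuffles of $u$ and $v$ depends only on these four data, so it is the same for all three orderings; summing over all pairs $(D,H)$ yields the proposition. The only step that is not pure bookkeeping is the six-case verification in the preceding paragraph, but each case reduces to a one-line comparison between the chosen order on $\{\su,\sd,\sl\}$ and the ordering of the integer labels.
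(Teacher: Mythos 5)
Your proof is correct and follows essentially the same route as the paper's: it decomposes each Motzkin path as a shuffle of its Dyck subpath and its horizontal subpath, labels the steps monotonically within each type so that word descents match Motzkin descents, and invokes Lemma~\ref{lem:shuffle}, noting that $\Des(u)$, $\Des(v)$ and the lengths are the same for all three orderings since $\su>\sd$ in each. The paper additionally supplies a second, bijective proof in Section~\ref{sec:Mcyc} via the maps $\varphi$ and $\varphi'$, but your argument matches its primary one.
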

We give a bijective proof for the above result in Section~\ref{sec:Mcyc}.

For motzkin paths with the order $\su>\sd>\sl$, we define the  cyclic descent extension as follows. 


\begin{definition}\label{def:Mcdes}
For $M\in\M_n$, let $n\in{\cDes}(M)$ if and only if either of the following conditions holds:
\begin{enumerate}
\item The first step of $M$ is $\sl$ and the final step of $M$ is $\sd$.
\item The first step of $M$ is $\su$, the final step of $M$ is $\sd$ and $M$ has no returns before the rightmost $\sd$.
\end{enumerate}

\end{definition}

\begin{remark}\label{remark:defmotz}
\begin{enumerate}
\item
When $M\in\M_n$ has no $\sl$ steps, Definition~\ref{def:Mcdes} coincides with that of Adin, Elizalde and Roichman for Dyck paths, see \cite[Definition~5.7]{AER}.
\item
It is clear from Definition~\ref{def:motzdes} and Definition~\ref{def:Mcdes} that having $\{n-1,n\}\subseteq\cDes(M)$ forces $1, n-2\notin \cDes(M)$ or $\{n, 1\}\subseteq\cDes(M)$ forces $n-1, 2\notin \cDes(M)$. That is consistent with the fact that a Motzkin path can not have three descents in consecutive positions.
\end{enumerate}
\end{remark}


\begin{example}
Figure~\ref{cdesmot2} shows two instances of $M\in\M_6$ for which $6\in {\cDes}(M)$.
\begin{figure}[htpb]
\begin{center}
\begin{tikzpicture}[scale=0.8]
		\draw[step=1cm, gray, thick,dotted] (0, 0) grid (6,2);
 		
 		\draw[black] (0, 0)--(1, 0)--(2, 1)--(3, 0)--(4, 0)--(5, 1)--(6, 0);
 		\draw[black] (0,0) node {$\bullet$};
 		\draw[black] (1,0) node {$\bullet$};
 		\draw[black] (2,1) node {$\bullet$};
 		\draw[black] (3,0) node {$\bullet$};
 		\draw[black] (4,0) node {$\bullet$};
 		\draw[black] (5,1) node {$\bullet$};
 		\draw[black] (6,0) node {$\bullet$};
                 \draw[black] (3.5,-0.5) node {$235\red{6}$};
 		\end{tikzpicture}
\begin{tikzpicture}[scale=0.8]
 		\draw[step=1cm, gray, thick,dotted] (0, 0) grid (6,2);
 		
 		\draw[black] (0, 0)--(1, 1)--(2, 1)--(3, 2)--(4, 1)--(5, 1)--(6, 0);
 		\draw[black] (0,0) node {$\bullet$};
 		\draw[black] (1,1) node {$\bullet$};
 		\draw[black] (2,1) node {$\bullet$};
 		\draw[black] (3,2) node {$\bullet$};
 		\draw[black] (4,1) node {$\bullet$};
 		\draw[black] (5,1) node {$\bullet$};
 		\draw[black] (6,0) node {$\bullet$};
                 \draw[black] (3.5,-0.5) node {$134\red{6}$};
 		\end{tikzpicture}			
\caption{Cyclic descent sets of two Motzkin paths in $\M_6$.}\label{cdesmot2}
\end{center}
\end{figure}
\end{example}





Let $\sl^n$ denote the path in $\M_n$ whose steps are all $\sl$ steps. Because of the non-Escher property of a cyclic extension, i.e., $\cDes(M)\neq \emptyset$ for $M\in\M_n$, we only consider a cyclic descent extension on  $\M^*_n:=\M_n\setminus{\{\sl^n\}}$.  For any fixed $0\leq k<n$, define
$$\M_{n,k}:=\{M\in\M_n: |M|_\sl=k\},$$
where $|M|_\sl$ is the number of $\sl$ steps in a Motzkin path $M$.
Our main result for Motzkin paths is stated next; the precise definition of $\rho$ is postponed to Section~\ref{sec:result1}. 


\begin{theorem}\label{cdesmotz}
For any $n\ge 3$ and $0\le k<n$, the pair $({\cDes}, \rho)$, with $\cDes$ and $\rho$ determined by Definition~\ref{def:Mcdes} and Definition~\ref{def:motzrho}, respectively, is a cyclic extension of ${\Des}$ on $\M_{n,k}$.
\end{theorem}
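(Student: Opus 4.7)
The plan is to verify the three cyclic-extension axioms of Definition~\ref{def:cDes} for the pair $(\cDes,\rho)$, together with the bijectivity of $\rho$ on $\M_{n,k}$. The extension axiom $\cDes(M)\cap[n-1]=\Des(M)$ is immediate from Definition~\ref{def:Mcdes}, which only stipulates membership of $n$ in $\cDes(M)$ and leaves the rest of the set equal to $\Des(M)$ by construction.

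For the non-Escher axiom, I would argue as follows. Because $k<n$, every $M\in\M_{n,k}$ contains at least one $\su$ step, so one can walk forward from the first $\su$ and let $i$ be the last position of a maximal initial run of $\su$'s; then step $i+1$ is either $\sd$ or $\sl$, producing a descent at $i\in[n-1]$, so $\cDes(M)\supseteq\Des(M)\neq\emptyset$. To show $\cDes(M)\subsetneq[n]$, I would use the fact recorded in Remark~\ref{remark:defmotz}(2) that no three cyclically consecutive positions can lie in $\cDes(M)$: two consecutive descents in $[n-1]$ force the local pattern $\su\sd\sl$, and the minimality of $\sl$ in the order $\su>\sd>\sl$ blocks a third consecutive descent; the wrap-around cases involving position~$n$ are handled by unpacking Definition~\ref{def:Mcdes} against the allowed first/last step combinations. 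For $n\ge 3$ this rules out $\cDes(M)=[n]$.

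The core step is the equivariance axiom $\cDes(\rho(M))=\cDes(M)+1\pmod n$. Granting the explicit formula for $\rho$ in Definition~\ref{def:motzrho} as a block-cyclic rotation whose precise form depends on the first step of $M$ (an initial $\sl$ is simply cycled to the end; an initial $\su$ is handled by a local rearrangement at its matching first return), I would verify equivariance position by position. For interior positions $i\in[n-1]\setminus\{1\}$ the transformation acts locally, so the descent pattern at $i$ in $M$ reappears at $i+1$ in $\rho(M)$. The verification then reduces to the two boundary positions: that descents near the rearranged prefix shift correctly, and that the condition $n\in\cDes(M)$ of Definition~\ref{def:Mcdes} translates into the ordinary descent condition $1\in\cDes(\rho(M))$. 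Bijectivity of $\rho$ follows from an explicit inverse that reverses the rearrangement while preserving the number of $\sl$ steps.

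The main obstacle is the equivariance check at the boundary, because condition~(2) of Definition~\ref{def:Mcdes}, involving the position of the rightmost $\sd$ relative to returns of $M$, is not local. I expect to isolate the first block of $M$ up to its first return together with the suffix of $M$ beginning at the rightmost $\sd$, show that $\rho$ rearranges only the interface between these segments, and then split into a small number of subcases (first step $\sl$; first step $\su$ with first return in the interior; first return coinciding with the rightmost $\sd$) so that each cyclic descent condition can be matched in both directions. Once this boundary case analysis is in hand, the remaining verifications are direct pattern checks.
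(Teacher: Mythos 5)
Your proposal follows essentially the same route as the paper's proof: extension is read off the definitions, non-Escher from the existence of a descent forced by a $\su$ step together with the impossibility of three (cyclically) consecutive descents, equivariance by a position-by-position check split according to the cases of Definition~\ref{def:motzrho} with the only real work at the two boundary positions, and bijectivity via an explicit inverse that preserves the number of $\sl$ steps (hence maps $\M_{n,k}$ to itself). One correction: Definition~\ref{def:motzrho} cases on the \emph{last} step of $M$ (a final $\sl$ is moved to the front; a final $\sd$ triggers the rearrangement at the final start step), so your description of $\rho$ as casing on the first step and moving steps to the end is in fact the paper's $\rho^{-1}$; the boundary analysis you outline still goes through, but with the roles of the two ends of the path exchanged.
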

Note that for $n\le 1$, $M_n^*=\varnothing$; and for $n=2$, $M_2^*=\{UD\}$ has a single element and therefore $\Des$ has no (non-Escher) cyclic extension.
By Theorem~\ref{cdesmotz}, we obtain the following corollary immediately.
\begin{corollary}\label{coro:cde} 
For any $n\ge 3$, the pair $({\cDes}, \rho)$ is a cyclic extension of $\Des$ on $\M^*_{n}$.
\end{corollary}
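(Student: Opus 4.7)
The plan is to deduce Corollary~\ref{coro:cde} from Theorem~\ref{cdesmotz} as an essentially immediate disjoint-union argument. The starting observation is that
\[
\M_n^* \;=\; \bigsqcup_{k=0}^{n-1} \M_{n,k},
\]
since the only Motzkin path with $|M|_\sl = n$ is $\sl^n$, which has been removed in passing from $\M_n$ to $\M_n^*$, while paths with fewer $\sl$ steps are partitioned by their number of $\sl$ steps.

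Next, once the definition of $\rho$ is in hand in Section~\ref{sec:result1}, the first thing I would check is that $\rho$ preserves the statistic $|M|_\sl$ (this should be built into Definition~\ref{def:motzrho}, since $\rho$ is designed to be compatible with the fiberwise description in Theorem~\ref{cdesmotz}). Granting this, $\rho$ restricts to a bijection on each block $\M_{n,k}$, and therefore assembles to a well-defined bijection on the disjoint union $\M_n^*$.

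The three axioms of a cyclic extension from Definition~\ref{def:cDes} — extension, equivariance, and non-Escher — are pointwise conditions on elements $M$. By Theorem~\ref{cdesmotz}, each of these pointwise conditions holds for every $M$ in every $\M_{n,k}$ with $0 \le k < n$. Since every $M \in \M_n^*$ lies in exactly one such $\M_{n,k}$, all three axioms transfer to $\M_n^*$ without any further verification.

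The only conceivable obstacle is the non-Escher property at the boundary $k=n-1$: one should note that $\M_{n,n-1}=\varnothing$ (a Motzkin path of length $n$ with $n-1$ horizontal steps would have a single non-horizontal step, which cannot return to the $x$-axis), so there is nothing to check there. For the remaining blocks $0 \le k \le n-2$ with $n\ge 3$, Theorem~\ref{cdesmotz} already supplies both $\cDes(M) \neq \varnothing$ and $\cDes(M) \neq [n]$. Hence no new work is required beyond invoking Theorem~\ref{cdesmotz} on each fiber.
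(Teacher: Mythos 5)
Your proof is correct and matches the paper's reasoning: the paper derives Corollary~\ref{coro:cde} from Theorem~\ref{cdesmotz} as an immediate consequence, and your disjoint-union argument (using that $\rho$ preserves the number of $\sl$ steps, which is Proposition~\ref{Mot:level=}, so that the fiberwise statement assembles over $\M_n^* = \bigsqcup_{k=0}^{n-1}\M_{n,k}$) is exactly the implicit justification. The aside that $\M_{n,n-1}=\varnothing$ is a harmless extra observation.
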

The paper is organized as follows. In Section~\ref{sec:result1}, we prove Theorem~\ref{cdesmotz}.
In Section~\ref{sec:Mcyc}, we give another equivalent cyclic extension for Motzkin paths.
 In Section~\ref{sec:SYT}, we describe the cyclic extension of Motzkin paths in terms of standard Young tableaux.

 
 \section{Proof of Theorem~\ref{cdesmotz}}\label{sec:result1}

Observation: $\cDes(M)=\varnothing$ if and only if $M=L^n$.  $\cDes(M)=[n]$ if and only if $M=\su\sd$ (and $n=2$). 
We construct a bijection $\rho:\M^*_n\to\M^*_n$ such that
${\cDes}(\rho M)=1+{\cDes}(M)$ for all $M\in\M^*_n\, (n\ge 3)$, with addition modulo $n$. Our construction of $(\cDes, \rho)$ comes from the idea that for all $1\leq i\leq n$, we have that $i\in\cDes(M)$ if and only if $i+1\in\cDes(\rho M)  \pmod n$.

\begin{definition}\label{def:motzrho}
Given $M\in\M^*_n$, define $\rho M$ by considering two cases:
\begin{enumerate}
	\item The final step of $M$ is $\sl$. Let $\rho M$ be the path obtained by moving the last $\sl$ of $M$ and inserting it as the first step of the path. In other words, write $M=\sp\sl$ for any Motzkin path $\sp\neq\sl^{n-1}$, and let $\rho M=\sl\sp$, 
see Figure~\ref{case2} for an example.
	 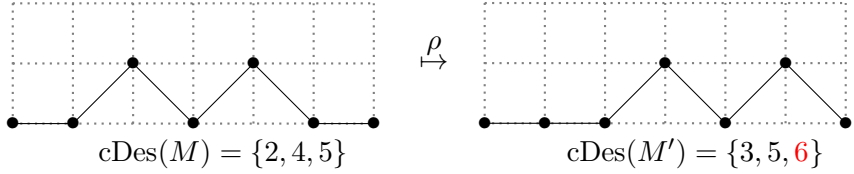
\begin{figure}
	 \begin{center}
	 		\begin{tikzpicture}[scale=0.8]
 		\draw[step=1cm, gray, thick,dotted] (0, 0) grid (6,2);
 		
 		\draw[black] (0, 0)--(1, 0)--(2, 1)--(3, 0)--(4, 1)--(5, 0)--(6, 0);
 		\draw[black] (0,0) node {$\bullet$};
 		\draw[black] (1,0) node {$\bullet$};
 		\draw[black] (2,1) node {$\bullet$};
 		\draw[black] (3,0) node {$\bullet$};
 		\draw[black] (4,1) node {$\bullet$};
 		\draw[black] (5,0) node {$\bullet$};
 		\draw[black] (6,0) node {$\bullet$};
                 \draw[black] (3.5,-0.5) node {$\cDes(M)=\{2,4,5\}$};
                \draw[black] (7.0, 1) node {$\mapsto$};
              \draw[black](7.0, 1.3) node {$\rho$};
 		\end{tikzpicture}
		\begin{tikzpicture}[scale=0.8]
 		\draw[step=1cm, gray, thick,dotted] (0, 0) grid (6,2);
 		\draw[black] (0, 0)--(1, 0)--(2, 0)--(3, 1)--(4, 0)--(5, 1)--(6, 0);
 		\draw[black] (0,0) node {$\bullet$};
 		\draw[black] (1,0) node {$\bullet$};
 		\draw[black] (2,0) node {$\bullet$};
 		\draw[black] (3,1) node {$\bullet$};
 		\draw[black] (4,0) node {$\bullet$};
 		\draw[black] (5,1) node {$\bullet$};
 		\draw[black] (6,0) node {$\bullet$};
                 \draw[black] (3.5,-0.5) node {$\cDes(M')=\{3,5,\red{6}\}$};
 		\end{tikzpicture}
		\caption{Case\,1:\,\,\,\,\,\,$\rho(M)=M'$}\label{case2}		 
	 \end{center}
	 \end{figure}

	\item  The final step of $M$ is $\sd$.
	 Let $\rho M$ be the path obtained by two steps. 
	 \begin{enumerate}
	\item First, move the last $\sd$ of $M$ and insert it before the final start step of the path.     
	\item  Second, move the last start step $\su$ of $M$ and insert it as the first step of the path. 
	  \end{enumerate}

	Write $M=\sp\su\sp'\sd$, where $\su$ is the final start step of $M$, 
	$\sp$ and $\sp'$ are any (possibly empty) Motzkin paths. 
	Let $\rho M=\su\sp\sd\sp'$, see Figure~\ref{case7} for an example.
	     	 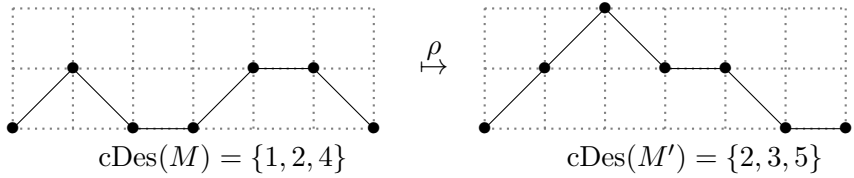
\begin{figure}
	 \begin{center}
	 		\begin{tikzpicture}[scale=0.8]
 		\draw[step=1cm, gray, thick,dotted] (0, 0) grid (6,2);
 		
 		\draw[black] (0, 0)--(1, 1)--(2, 0)--(3, 0)--(4, 1)--(5, 1)--(6, 0);
 		\draw[black] (0,0) node {$\bullet$};
 		\draw[black] (1,1) node {$\bullet$};
 		\draw[black] (2,0) node {$\bullet$};
 		\draw[black] (3,0) node {$\bullet$};
 		\draw[black] (4,1) node {$\bullet$};
 		\draw[black] (5,1) node {$\bullet$};
 		\draw[black] (6,0) node {$\bullet$};
                 \draw[black] (3.5,-0.5) node {$\cDes(M)=\{1,2,4\}$};
                \draw[black] (7.0, 1) node {$\mapsto$};
              \draw[black](7.0, 1.3) node {$\rho$};
 		\end{tikzpicture}
		\begin{tikzpicture}[scale=0.8]
 		\draw[step=1cm, gray, thick,dotted] (0, 0) grid (6,2);
 		\draw[black] (0, 0)--(1, 1)--(2, 2)--(3, 1)--(4, 1)--(5, 0)--(6, 0);
 		\draw[black] (0,0) node {$\bullet$};
 		\draw[black] (1,1) node {$\bullet$};
 		\draw[black] (2,2) node {$\bullet$};
 		\draw[black] (3,1) node {$\bullet$};
 		\draw[black] (4,1) node {$\bullet$};
 		\draw[black] (5,0) node {$\bullet$};
 		\draw[black] (6,0) node {$\bullet$};
                 \draw[black] (3.5,-0.5) node {$\cDes(M')=\{2,3,5\}$};
 		\end{tikzpicture}
		\caption{Case\,2:\,\,\,\,\,\,$\rho(M)=M'$}\label{case7}		 
	 \end{center}
	 \end{figure}

\end{enumerate}
\end{definition}	


%


Observing all two cases of Definition~\ref{def:motzrho}, we obtain the following property of $\rho$.	
	
\begin{proposition}\label{Mot:level=}
The number of horizontal steps is invariant under the shift operation $\rho$ on Motzkin paths.
\end{proposition}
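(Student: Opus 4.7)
The plan is a direct case check against the two clauses of Definition~\ref{def:motzrho}; nothing beyond inspection is needed, since in each clause $\rho$ is built by permuting the letters of $M$ without changing the multiset of steps. I would organize the verification in one short paragraph per case, and conclude by noting that this means $\rho$ restricts to a map on each $\M_{n,k}$, which is exactly what is needed for Theorem~\ref{cdesmotz}.

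In Case~1, $M=\sp\sl$ with $\sp\in\M_{n-1}\setminus\{\sl^{n-1}\}$, and $\rho M=\sl\sp$. The multiset of steps of $\rho M$ is the same as that of $M$; in particular $|\rho M|_\sl=|\sp|_\sl+1=|M|_\sl$.

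In Case~2, write $M=\sp\,\su\,\sp'\,\sd$, where $\su$ is the final start step and $\sp,\sp'$ are (possibly empty) Motzkin paths. Then $\rho M=\su\,\sp\,\sd\,\sp'$. Again this merely permutes the letters: one $\su$, one $\sd$, and the sub-words $\sp$ and $\sp'$ are all preserved, so $|\rho M|_\sl=|\sp|_\sl+|\sp'|_\sl=|M|_\sl$.

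There is no real obstacle here; the only thing one has to be careful about is that Case~1 and Case~2 together cover $\M_n^*$ (the case of final step $\su$ cannot occur, since a Motzkin path ending at height $0$ with final step $\su$ would have negative height). Combining the two cases, $|\rho M|_\sl=|M|_\sl$ for every $M\in\M_n^*$, which is the claim.
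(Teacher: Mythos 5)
Your proof is correct and matches the paper's reasoning: the paper simply observes that in both cases of Definition~\ref{def:motzrho} the map $\rho$ only rearranges the steps of $M$, so the number of $\sl$ steps is unchanged. Your added remark that the two cases exhaust $\M_n^*$ (a Motzkin path cannot end with $\su$) is a harmless and sensible bit of extra care.
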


\begin{proof}[Proof of Theorem~\ref{cdesmotz}]	
By Proposition~\ref{Mot:level=} $\rho$ maps $M_{n,k}$ into itself. To prove Theorem~\ref{cdesmotz} we need to  show that $(\cDes, \rho)$ satisfies the extension, non-Escher and equivariance.
Extension follows from Definition~\ref{def:motzdes} and Definition~\ref{def:Mcdes}. 
For $M\in\M^*_n$, $M$ has at least one $\su$ step (which cannot be the last step), and thus by Definition~\ref{def:motzdes}, we have $1\leq|\cDes(M)|$. By Remark~\ref{remark:defmotz}, we have, for $n\ge 3$, $|\cDes(M)| \leq n-1$. This implies the non-Escher property.
To  prove the equivariance property, we need to show that $\cDes(\rho M)=1+\cDes(M)$, with addition modulo $n$, in all two cases of Definition~\ref{def:motzrho}. For example, in case $1$, we have that $n \notin\cDes(M)$ and $1\notin\cDes(\rho M)$(by Definition~\ref{def:motzdes} and Definition~\ref{def:Mcdes}), whereas $n-1 \in\cDes(M)$ if and only if the final step of $\sp$ is $\sd$ which implies $n\in\cDes(\rho M)$; additionally, for all $1\leq i\leq n-2$, we have that $i\in\cDes(M)$ if and only if $i+1\in\cDes(\rho M)$ because all such descents must be inside $\sp$, which is shifted one position to the right by $\rho$. Similarly for the other case.

To prove that $\rho$ is a bijection, we describe its inverse in each of the two cases above. For a given $M\in\M^*_n$, define $\rho^{-1}M$ as follows:

\begin{enumerate}
	\item The first step of $M$ is $\sl$. Let $\rho^{-1} M$ be the path obtained by moving the first $\sl$ of $M$ and inserting it as the last step of the path. In other words, write $M=\sl\sp$ for any Motzkin path $\sp\neq\sl^{n-1}$, and let $\rho^{-1} M=\sp\sl$. 

	\item  The first step of $M$ is $\su$.
	 Let $\rho^{-1} M$ be the path obtained by two steps. 
	 \begin{enumerate}
	\item First, move the first $\su$ of $M$ and insert it before the first return step of the path.     
	\item  Second, move the first return step $\sd$ of $M$ and insert it as the final step of the path. 
	  \end{enumerate}

	In other words, write $M=\su\sp\sd\sp'$, where $\sd$ is the first return step of $M$, 
	$\sp$ and $\sp'$ are any (possibly empty) Motzkin paths.  Then we define $\rho^{-1} M=\sp\su\sp'\sd$.

\end{enumerate}

\end{proof}


\section{A cyclic extension for a different order}\label{sec:Mcyc}

In this section, we construct a cyclic descent extension of the descent map on Motzkin paths, for a different order between the $\su, \sl$ and $\sd$ step types. We first introduce the following two bijections, which preserve the descent set under different orderings of the three step types $\su$, $\sl$, $\sd$.

\begin{lemma}\label{bijection:Mot}
\begin{enumerate}
\item
For $M\in\M_n$, there is a descent set preserving bijection~$\varphi$ on Motzkin paths which transforms $\su>\sl>\sd$ to $\su>\sd>\sl$ and preserves the numbers of $\su$, $\sl$ and $\sd$ steps.
\item
For $M\in\M_n$, there is a descent set preserving bijection~$\varphi'$ on Motzkin paths which transforms $\sl>\su>\sd$ to $\su>\sl>\sd$ and preserves the numbers of $\su$, $\sl$ and $\sd$ steps.
\end{enumerate}
\end{lemma}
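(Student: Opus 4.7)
The plan is to construct $\varphi$ explicitly by decomposing each $M\in\M_n$ into maximal blocks of $\sl$ and $\sd$ steps (separated by $\su$ steps or the endpoints of $M$) and applying a local bijection $\psi$ to each block. The key observation I will use is that since $\su$ is the largest element in both orderings, the descent status of any pair involving $\su$ is identical under $\su>\sl>\sd$ and $\su>\sd>\sl$; the only difference is that $(\sl,\sd)$ is a descent under the first ordering while $(\sd,\sl)$ is a descent under the second. Hence it suffices to handle each $\sl/\sd$-block separately.

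Within such a block $B$ consisting of $a$ letters $\sl$ and $b$ letters $\sd$, I will define $\psi(B)$ via the decomposition of $B$ at the positions $s_1<\cdots<s_p$ where $(B_i,B_{i+1})=(\sl,\sd)$. Each piece strictly between two consecutive such positions contains no $\sl\sd$ pattern and therefore has the form $\sd^{u_j}\sl^{v_j}$, subject to the boundary constraints $v_0\ge 1$, $u_p\ge 1$, and $u_j,v_j\ge 1$ for interior $j$. I will set $\psi(B)$ to be the block obtained by replacing each interior piece $\sd^{u_j}\sl^{v_j}$ by $\sl^{v_j}\sd^{u_j}$, the first piece by $\sl^{v_0-1}\sd^{u_0+1}$, and the last piece by $\sl^{v_p+1}\sd^{u_p-1}$ (in the degenerate case $p=0$, $B=\sd^b\sl^a$ and $\psi(B)=\sl^a\sd^b$). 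A direct computation will show that $\psi(B)$ has the same $(a,b)$ counts as $B$ and that its $\sd\sl$-positions are exactly $\{s_1,\ldots,s_p\}$; invertibility is immediate from the same formulas read in reverse.

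Assembling the pieces, $\varphi(M)$ is the path obtained by replacing every $\sl/\sd$-block of $M$ by its $\psi$-image. The starting height $h_0$ of each block satisfies $h_0\ge b$ by the Motzkin condition at the block's right endpoint, so any rearrangement of the block with the same $\sd$-count remains nonnegative, ensuring $\varphi(M)\in\M_n$; the step-type counts are preserved because the $\su$-positions are fixed and $\psi$ preserves each block's $(a,b)$. Descent preservation at block boundaries follows immediately from the key observation, and bijectivity of $\varphi$ follows from that of $\psi$. The construction of $\varphi'$ in part~(2) is entirely analogous: since $\sl>\su>\sd$ and $\su>\sl>\sd$ differ only in the relative order of $\sl$ and $\su$, one decomposes $M$ at its $\sd$ steps and applies an analogous piece-based bijection to each maximal $\sl/\su$-block (rearrangements of such blocks are automatically valid because $\sl$ and $\su$ never lower the height). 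I expect the main technical obstacle to be the careful verification that the $\pm 1$ boundary adjustments produce nonnegative exponents and yield the correct total counts in all cases; once this bookkeeping is confirmed, the descent-set correspondence and the Motzkin validity of $\varphi(M)$ and $\varphi'(M)$ follow without surprises.
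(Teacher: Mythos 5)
Your construction is correct and is essentially the paper's own proof: the paper defines $\varphi$ by fixing the positions of all $\su$'s and of all descents (turning each $\sl\sd$ into $\sd\sl$) and reversing each descent-free run $\sd^a\sl^b$ into $\sl^b\sd^a$, which is exactly the map your block decomposition with the $\pm1$ boundary adjustments produces. Your additional check that $h_0\ge b$ guarantees $\varphi(M)\in\M_n$ (and the corresponding trivial observation for $\varphi'$) is a point the paper leaves implicit, but otherwise the two arguments coincide.
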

\begin{proof}
The map~$\varphi: M\to M$ can be described as follows.
Preserve the locations of all $\su$'s and all other descents ($\sl\sd$, which transform to $\sd\sl$).
In all intermediate intervals (between $\su$'s and descents) reverse the (descent-free) sequence of letters:  $\sd^a\sl^b\to \sl^b\sd^a$. It is easy to see  that $\varphi$ is a bijection by considering the inverse map. Clearly $\varphi$ preserves the numbers of $\su$, $\sl$ and $\sd$ steps.

Similarly, 
The map~$\varphi': M\to M$ can be described as follows.
Preserve the locations of all $\sd$'s and all other descents ($\sl\su$, which transform to $\su\sl$).
In all intermediate intervals (between $\sd$'s and descents) reverse the (descent-free) sequence of letters:  $\su^a\sl^b\to \sl^b\su^a$. It is easy to see $\varphi'$ is a bijection by considering the inverse map. Clearly $\varphi'$ preserves the numbers of $\su$, $\sl$ and $\sd$ steps.
\end{proof}

\begin{example}
For $M\in\M_7$, 
we have
\begin{align*}
&\su\su\sd\sd\sl\sl\sl \xrightarrow[]{\varphi}
\su\su\sl\sl\sl\sd\sd \xrightarrow[]{\varphi}
\su\su\sl\sl\sd\sl\sd\\ 
&\xrightarrow[]{\varphi} 
\su\su\sl\sd\sl\sd\sl 
\xrightarrow[]{\varphi}
 \su\su\sd\sl\sd\sl\sl 
(\xrightarrow[]{\varphi}\su\su\sd\sd\sl\sl\sl),
\end{align*}
and
\begin{align*}
&\su\su\sl\sl\sl\sd\sd \xrightarrow[]{\varphi'}
\sl\sl\sl\su\su\sd\sd \xrightarrow[]{\varphi'}
\sl\sl\su\sl\su\sd\sd\\ 
&\xrightarrow[]{\varphi'} 
\sl\su\sl\su\sl\sd\sd 
\xrightarrow[]{\varphi'}
 \su\sl\su\sl\sl\sd\sd 
(\xrightarrow[]{\varphi'}\su\su\sl\sl\sl\sd\sd).
\end{align*}
\end{example}
Lemma~\ref{bijection:Mot} yields a bijective proof of Proposition~\ref{prop:1.6}.

%
%
%
By Corollary~\ref{coro:cde} and Proposition~\ref{prop:1.6}, we have the following result.
\begin{proposition}
The descent map on $\M^*_n$ has a cyclic extension for each of the orders $\su>\sd>\sl$, $\su>\sl>\sd$ and $\sl>\su>\sd$.
\end{proposition}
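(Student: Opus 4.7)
The plan is to transport the cyclic extension $(\cDes,\rho)$ from Corollary~\ref{coro:cde}, which covers the ordering $\su>\sd>\sl$, to the other two orderings using the descent-preserving bijections $\varphi$ and $\varphi'$ of Lemma~\ref{bijection:Mot}. This is essentially a transport of structure along bijections, so the real content lies in Lemma~\ref{bijection:Mot}; the proposition should fall out by routine substitution once we set everything up carefully.

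First I would handle the ordering $\su>\sl>\sd$. Define, for every $M\in\M^*_n$,
\[
\cDes_{\varphi}(M) := \cDes(\varphi(M)),\qquad \rho_{\varphi}(M) := \varphi^{-1}\bigl(\rho(\varphi(M))\bigr).
\]
Since $\varphi$ preserves the numbers of $\su$, $\sl$ and $\sd$ steps, it fixes $\sl^n$ (the unique Motzkin path with $n$ horizontal steps) and therefore restricts to a bijection of $\M^*_n$ onto itself; hence $\rho_{\varphi}$ is a well-defined bijection on $\M^*_n$, being a conjugate of $\rho$. The three axioms of Definition~\ref{def:cDes} then follow by direct substitution. \emph{Extension} holds because $\cDes_{\varphi}(M)\cap[n-1]=\cDes(\varphi(M))\cap[n-1]$ equals the descent set of $\varphi(M)$ under $\su>\sd>\sl$, which by Lemma~\ref{bijection:Mot}(1) coincides with the descent set of $M$ under $\su>\sl>\sd$. \emph{Equivariance} reduces to
\[
\cDes_{\varphi}(\rho_{\varphi}(M)) = \cDes\bigl(\rho(\varphi(M))\bigr) = \cDes(\varphi(M))+1 = \cDes_{\varphi}(M)+1,
\]
using the equivariance of $(\cDes,\rho)$. \emph{Non-Escher} is inherited from $(\cDes,\rho)$ since $\varphi(M)\in\M^*_n$ whenever $M\in\M^*_n$.

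Next, for the ordering $\sl>\su>\sd$, I would apply exactly the same construction with $\varphi\circ\varphi'$ in place of $\varphi$, chaining Lemma~\ref{bijection:Mot}(2) with Lemma~\ref{bijection:Mot}(1) to pass from $\sl>\su>\sd$ to $\su>\sl>\sd$ and then to $\su>\sd>\sl$. Equivalently, one can iterate the previous paragraph, first transporting $(\cDes,\rho)$ from $\su>\sd>\sl$ to $\su>\sl>\sd$ via $\varphi$ and then from $\su>\sl>\sd$ to $\sl>\su>\sd$ via $\varphi'$. In either case the verification of the three axioms is the same formal substitution.

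I do not anticipate any serious obstacle here: the real work has already been done in Lemma~\ref{bijection:Mot} and Corollary~\ref{coro:cde}. The only point that requires even a second's thought is that $\varphi$ and $\varphi'$ restrict to $\M^*_n$, which is immediate from their preservation of the step-type counts. Thus the proposition is a formal consequence of the earlier results in the paper.
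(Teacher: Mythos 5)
Your proposal is correct and follows essentially the same route as the paper: the paper deduces this proposition from Corollary~\ref{coro:cde} together with the descent-preserving bijections $\varphi$, $\varphi'$ of Lemma~\ref{bijection:Mot}, the transport of the cyclic extension being exactly the formal conjugation you write out (which the paper packages as Lemma~\ref{isomop}, quoted from \cite{AER}). Your additional observation that $\varphi$ and $\varphi'$ fix $\sl^n$ and hence restrict to $\M^*_n$ is a correct and worthwhile detail that the paper leaves implicit.
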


We shall find an explicit construction of a cyclic descent extension for the order $\su>\sl>\sd$. The construction for $\sl>\su>\sd$ is left to the interested reader.



%


\begin{definition}\label{def2:motzdes}

Assume that $\su>\sl>\sd$. An index $i$ is a descent of a path $M\in\M_n$ if one of the following conditions holds:
\begin{enumerate}
\item
 The $i$th step of $M$ is a $\su$ followed by a $\sd$.
\item
 The $i$th step of $M$ is a $\su$ followed by an $\sl$.
 \item
  The $i$th step of $M$ is an $\sl$ followed by a $\sd$.
\end{enumerate}
Let $\widehat{\Des}(M)$ denote the set of descents of $M$.
\end{definition}

We now define the following cyclic descents.


 \begin{definition}\label{def2:Mcdes}
For $M\in\M_n$, let $n\in\widehat{\cDes}(M)$ if and only if either of the following conditions holds:
\begin{enumerate}
\item  The first step of $M$ is $\sl$ and the step preceding the rightmost return is not an $\sl$.
\item The first step of $M$ is $\su$, $M$ has only one return, and the step preceding the return is not an $\sl$.
\end{enumerate}

\end{definition}

\begin{example}
By Definition~\ref{def2:motzdes} and Definition~\ref{def2:Mcdes}, we give two examples in Figure~\ref{2edcdesmot2}.
\begin{figure}[htpb]
\begin{center}
\begin{tikzpicture}[scale=0.8]
		\draw[step=1cm, gray, thick,dotted] (0, 0) grid (6,2);
 		
 		\draw[black] (0, 0)--(1, 0)--(2, 1)--(3, 0)--(4, 1)--(5, 0)--(6, 0);
 		\draw[black] (0,0) node {$\bullet$};
 		\draw[black] (1,0) node {$\bullet$};
 		\draw[black] (2,1) node {$\bullet$};
 		\draw[black] (3,0) node {$\bullet$};
 		\draw[black] (4,1) node {$\bullet$};
 		\draw[black] (5,0) node {$\bullet$};
 		\draw[black] (6,0) node {$\bullet$};
                 \draw[black] (3.5,-0.5) node {$24\red{6}$};
 		\end{tikzpicture}
\begin{tikzpicture}[scale=0.8]
 		\draw[step=1cm, gray, thick,dotted] (0, 0) grid (6,2);
 		
 		\draw[black] (0, 0)--(1, 1)--(2, 1)--(3, 2)--(4, 2)--(5, 1)--(6, 0);
 		\draw[black] (0,0) node {$\bullet$};
 		\draw[black] (1,1) node {$\bullet$};
 		\draw[black] (2,1) node {$\bullet$};
 		\draw[black] (3,2) node {$\bullet$};
 		\draw[black] (4,2) node {$\bullet$};
 		\draw[black] (5,1) node {$\bullet$};
 		\draw[black] (6,0) node {$\bullet$};
                 \draw[black] (3.5,-0.5) node {$134\red{6}$};
 		\end{tikzpicture}			
\caption{Cyclic descent sets of two Motzkin paths in $\M_6$}\label{2edcdesmot2}
\end{center}
\end{figure}
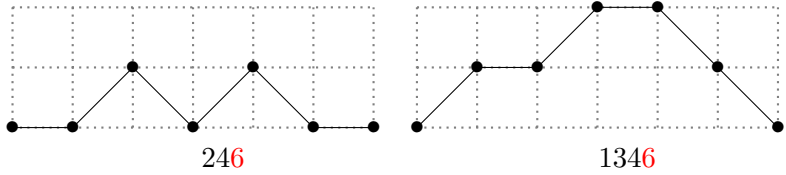
\end{example}

We construct a  bijection $\hat{\rho}:\M^*_n\to\M^*_n$ such that
$\cDes(\hat\rho M)=1+\cDes(M)$ for all $M\in\M^*_n\, (n\ge 3)$, where addition is modulo $n$.

\begin{definition}\label{def:rho1}

Given $M\in\M^*_n$, define $\hat\rho M$ by considering three cases:

\begin{enumerate}
\item If the step before the last return of $M$ is $\su$. Write $M=\sp\sl^i\su\sd\sl^j\,(i, j\geq 0)$, where $\sp$ may be empty or the final step of $\sp\in\M_{n-2-i-j}$ is $\sd$. There are two cases.
\begin{enumerate}
\item When $j=0$, let $\hat\rho M$ be the path obtained by two steps.
	 \begin{enumerate}
	 \item First, move the last $\su$ of $M$ and insert it as the first step of the path.     
	\item Second,  move the last $\sd$ of $M$ and insert it after the second $\sd$ step of the path from right to left. 	 
	\end{enumerate}
In other words, let $\hat\rho M=\su \sp\sd\sl^i$. See Figure~\ref{2edcase1} for an example.  

	 \begin{figure}
	 \begin{center}
	 		\begin{tikzpicture}[scale=0.8]
 		\draw[step=1cm, gray, thick,dotted] (0, 0) grid (6,2);
 		
 		\draw[black] (0, 0)--(1, 0)--(2, 1)--(3, 0)--(4, 0)--(5, 1)--(6, 0);
 		\draw[black] (0,0) node {$\bullet$};
 		\draw[black] (1,0) node {$\bullet$};
 		\draw[black] (2,1) node {$\bullet$};
 		\draw[black] (3,0) node {$\bullet$};
 		\draw[black] (4,0) node {$\bullet$};
 		\draw[black] (5,1) node {$\bullet$};
 		\draw[black] (6,0) node {$\bullet$};
                 \draw[black] (3.5,-0.5) node {$\cDes(M)=\{2,5,\red{6}\}$};
                \draw[black] (7.0, 1) node {$\mapsto$};
              \draw[black](7.0, 1.3) node {$\hat\rho$};
 		\end{tikzpicture}
		\begin{tikzpicture}[scale=0.8]
 		\draw[step=1cm, gray, thick,dotted] (0, 0) grid (6,2);
 		\draw[black] (0, 0)--(1, 1)--(2, 1)--(3, 2)--(4, 1)--(5, 0)--(6, 0);
 		\draw[black] (0,0) node {$\bullet$};
 		\draw[black] (1,1) node {$\bullet$};
 		\draw[black] (2,1) node {$\bullet$};
 		\draw[black] (3,2) node {$\bullet$};
 		\draw[black] (4,1) node {$\bullet$};
 		\draw[black] (5,0) node {$\bullet$};
 		\draw[black] (6,0) node {$\bullet$};
                 \draw[black] (3.5,-0.5) node {$\cDes(M')=\{1,3,\red{6}\}$};
 		\end{tikzpicture}
		\caption{Case\,1(a):\,\,\,\,\,\,$\hat\rho(M)=M'$}\label{2edcase1}		 
	 \end{center}
	 \end{figure}
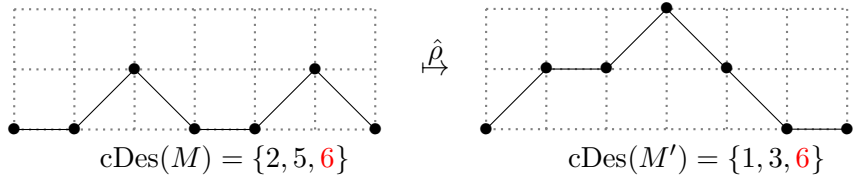

\item When $j\ge1$, let $\hat\rho M=\su \sp\sl^{i+1}\sd\sl^{j-1}$.  See Figure~\ref{2edcase3c} for an example.
%
%
%
%
	 \begin{figure}
	 \begin{center}
	 		\begin{tikzpicture}[scale=0.8]
 		\draw[step=1cm, gray, thick,dotted] (0, 0) grid (6,2);
 		
 		\draw[black] (0, 0)--(1, 0)--(2, 1)--(3, 0)--(4, 1)--(5, 0)--(6,0);
 		\draw[black] (0,0) node {$\bullet$};
 		\draw[black] (1,0) node {$\bullet$};
 		\draw[black] (2,1) node {$\bullet$};
 		\draw[black] (3,0) node {$\bullet$};
 		\draw[black] (4,1) node {$\bullet$};
 		\draw[black] (5,0) node {$\bullet$};
 		\draw[black] (6,0) node {$\bullet$};
                 \draw[black] (3.5,-0.5) node {$\cDes(M)=\{2, 4, \red{6}\}$};
                \draw[black] (7.0, 1) node {$\mapsto$};
              \draw[black](7.0, 1.3) node {$\hat\rho$};
 		\end{tikzpicture}
		\begin{tikzpicture}[scale=0.8]
 		\draw[step=1cm, gray, thick,dotted] (0, 0) grid (6,2);
 		\draw[black] (0, 0)--(1, 1)--(2, 1)--(3, 2)--(4, 1)--(5, 1)--(6, 0);
 		\draw[black] (0,0) node {$\bullet$};
 		\draw[black] (1,1) node {$\bullet$};
 		\draw[black] (2,1) node {$\bullet$};
 		\draw[black] (3,2) node {$\bullet$};
 		\draw[black] (4,1) node {$\bullet$};
 		\draw[black] (5,1) node {$\bullet$};
 		\draw[black] (6,0) node {$\bullet$};
	        \draw[black] (3.5,-0.5) node {$\cDes(M')=\{1, 3, 5\}$};
 		\end{tikzpicture}
		\caption{Case\,1(b):\,\,\,\,\,\,$\hat\rho(M)=M'$}\label{2edcase3c}		 
	 \end{center}
	 \end{figure}
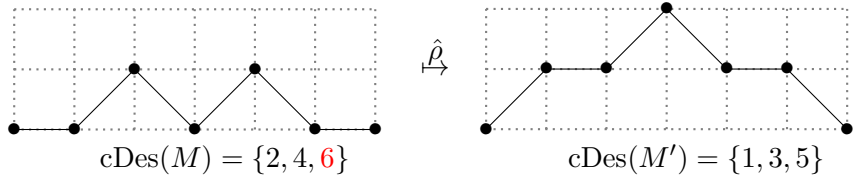

\end{enumerate}

\item
If the step before the last return of $M$ is $\sl$. 
 Write $M=\sp\sl^i\sd\sl^j\,(i\geq 1, j\geq 0)$, where the final step of $\sp\in\M_{n-1-i-j}$ is not $\sl$. There are two cases. 
  \begin{enumerate}
	 \item When $j=0$, let $\hat\rho M$ be the path obtained by two steps. 
	 \begin{enumerate}
	\item First, move the last $\sl$ of $M$ and insert it as the first step of the path.     
	\item  Second, move the last $\sd$ of $M$ and insert it after the second non $\sl$ step of the path from right to left. 
	  \end{enumerate}

 In other words, let $\hat\rho M=\sl \sp\sd\sl^{i-1}$. See Figure~\ref{2edcase2a} for an example. 

	 \begin{figure}
	 \begin{center}
	 		\begin{tikzpicture}[scale=0.8]
 		\draw[step=1cm, gray, thick,dotted] (0, 0) grid (6,2);
 		
 		\draw[black] (0, 0)--(1, 0)--(2, 1)--(3, 0)--(4, 1)--(5, 1)--(6, 0);
 		\draw[black] (0,0) node {$\bullet$};
 		\draw[black] (1,0) node {$\bullet$};
 		\draw[black] (2,1) node {$\bullet$};
 		\draw[black] (3,0) node {$\bullet$};
 		\draw[black] (4,1) node {$\bullet$};
 		\draw[black] (5,1) node {$\bullet$};
 		\draw[black] (6,0) node {$\bullet$};
                 \draw[black] (3.5,-0.5) node {$\cDes(M)=\{2,4,5\}$};
                \draw[black] (7.0, 1) node {$\mapsto$};
              \draw[black](7.0, 1.3) node {$\hat\rho$};
 		\end{tikzpicture}
		\begin{tikzpicture}[scale=0.8]
 		\draw[step=1cm, gray, thick,dotted] (0, 0) grid (6,2);
 		\draw[black] (0, 0)--(1, 0)--(2, 0)--(3, 1)--(4, 0)--(5, 1)--(6, 0);
 		\draw[black] (0,0) node {$\bullet$};
 		\draw[black] (1,0) node {$\bullet$};
 		\draw[black] (2,0) node {$\bullet$};
 		\draw[black] (3,1) node {$\bullet$};
 		\draw[black] (4,0) node {$\bullet$};
 		\draw[black] (5,1) node {$\bullet$};
 		\draw[black] (6,0) node {$\bullet$};
                 \draw[black] (3.5,-0.5) node {$\cDes(M')=\{3,5,\red{6}\}$};
 		\end{tikzpicture}
		\caption{Case\,2(a):\,\,\,\,\,\,$\hat\rho(M)=M'$}\label{2edcase2a}		 
	 \end{center}
	 \end{figure}
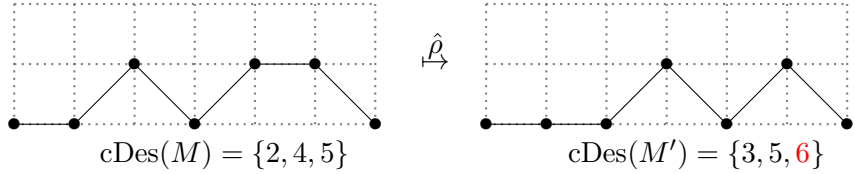


\item
when $j\ge1$, Let $\hat\rho M$ be the path obtained by moving the last $\sl$ of $M$ and inserting it as the first step of the path. In other words, let $\hat\rho M=\sl\sp\sl^{i}\sd\sl^{j-1}$. See Figure~\ref{2edcase4a} for an example.

	 \begin{figure}
	 \begin{center}
	 		\begin{tikzpicture}[scale=0.8]
 		\draw[step=1cm, gray, thick,dotted] (0, 0) grid (6,2);
 		
 		\draw[black] (0, 0)--(1, 1)--(2, 2)--(3, 1)--(4, 1)--(5, 0)--(6, 0);
 		\draw[black] (0,0) node {$\bullet$};
 		\draw[black] (1,1) node {$\bullet$};
 		\draw[black] (2,2) node {$\bullet$};
 		\draw[black] (3,1) node {$\bullet$};
 		\draw[black] (4,1) node {$\bullet$};
 		\draw[black] (5,0) node {$\bullet$};
 		\draw[black] (6,0) node {$\bullet$};
                 \draw[black] (3.5,-0.5) node {$\cDes(M)=\{2,4\}$};
                \draw[black] (7.0, 1) node {$\mapsto$};
              \draw[black](7.0, 1.3) node {$\hat\rho$};
 		\end{tikzpicture}
		\begin{tikzpicture}[scale=0.8]
 		\draw[step=1cm, gray, thick,dotted] (0, 0) grid (6,2);
 		\draw[black] (0, 0)--(1, 0)--(2, 1)--(3, 2)--(4, 1)--(5, 1)--(6, 0);
 		\draw[black] (0,0) node {$\bullet$};
 		\draw[black] (1,0) node {$\bullet$};
 		\draw[black] (2,1) node {$\bullet$};
 		\draw[black] (3,2) node {$\bullet$};
 		\draw[black] (4,1) node {$\bullet$};
 		\draw[black] (5,1) node {$\bullet$};
 		\draw[black] (6,0) node {$\bullet$};
                 \draw[black] (3.5,-0.5) node {$\cDes(M')=\{3,5\}$};
 		\end{tikzpicture}
		\caption{Case\,2(b):\,\,\,\,\,\,$\hat\rho(M)=M'$}\label{2edcase4a}		 
	 \end{center}
	 \end{figure}
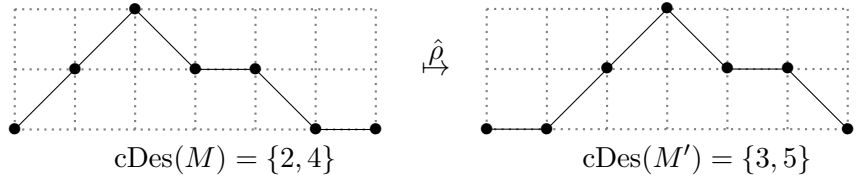	

\end{enumerate}
\item 
	The step before the last return of $M$ is $\sd$.
	 Write $M=\sp\sl^i\su\sl^k\sp'\sd\sl^j\,(i, j,k\geq 0)$, where $\su$ is the final start $\su$ step, the final step of $\sp\in\M_{n-2-|\sp'|-i-j-k}$ (possibly empty) is $\sd$, and the first step and the  final step of $\sp'$ are $\su$ and $\sd$. There are two cases. 
 \begin{enumerate}
	 \item
	When $k=0$, let $\hat\rho M$ be the path obtained by two steps. 
	 \begin{enumerate}
	 \item  First, move the final $\sd$ of $M$ and insert it after the second return of the path from right to left. 
	 \item Second, move the final start $\su$ step of $M$ and insert it as the first step of the path.  
\end{enumerate}
In other words, let $\hat\rho M=\su \sp\sd\sl^i\sp'\sl^{j}$ . See Figure~\ref{2edcase4c} for an example.  	 
	 \begin{figure}
	 \begin{center}
	 		\begin{tikzpicture}[scale=0.8]
 		\draw[step=1cm, gray, thick,dotted] (0, 0) grid (7,2);
 		
 		\draw[black] (0, 0)--(1, 1)--(2, 0)--(3, 0)--(4, 1)--(5, 2)--(6, 1)--(7,0);
 		\draw[black] (0,0) node {$\bullet$};
 		\draw[black] (1,1) node {$\bullet$};
 		\draw[black] (2,0) node {$\bullet$};
 		\draw[black] (3,0) node {$\bullet$};
 		\draw[black] (4,1) node {$\bullet$};
 		\draw[black] (5,2) node {$\bullet$};
 		\draw[black] (6,1) node {$\bullet$};
		\draw[black] (7,0) node {$\bullet$};		
                 \draw[black] (3.5,-0.5) node {$\cDes(M)=\{1, 5\}$};
                \draw[black] (8.0, 1) node {$\mapsto$};
              \draw[black](8.0, 1.3) node {$\hat\rho$};
 		\end{tikzpicture}
		\begin{tikzpicture}[scale=0.8]
 		\draw[step=1cm, gray, thick,dotted] (0, 0) grid (7,2);
 		\draw[black] (0, 0)--(1, 1)--(2, 2)--(3, 1)--(4, 0)--(5, 0)--(6, 1)--(7,0);
 		\draw[black] (0,0) node {$\bullet$};
 		\draw[black] (1,1) node {$\bullet$};
 		\draw[black] (2,2) node {$\bullet$};
 		\draw[black] (3,1) node {$\bullet$};
 		\draw[black] (4,0) node {$\bullet$};
 		\draw[black] (5,0) node {$\bullet$};
 		\draw[black] (6,1) node {$\bullet$};
		\draw[black] (7,0) node {$\bullet$};
	        \draw[black] (3.5,-0.5) node {$\cDes(M')=\{2,6\}$};
 		\end{tikzpicture}
		\caption{Case\,3(a):\,\,\,\,\,\,$\hat\rho(M)=M'$}\label{2edcase4c}		 
	 \end{center}
	 \end{figure}
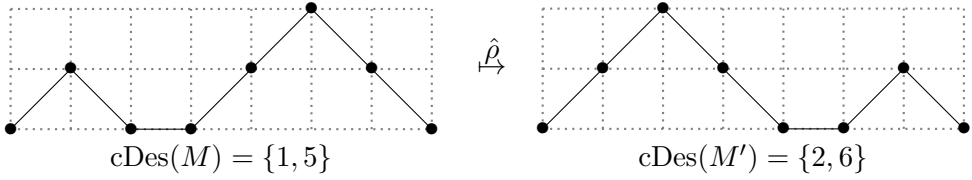

\item
When $k\ge1$, let $\hat\rho M=\su \sp\sl^{i+1}\sd\sl^{k-1}\sp'\sl^{j}$. See Figure~\ref{2edcase3a} for an example.

	
	 \begin{figure}
	 \begin{center}
	 		\begin{tikzpicture}[scale=0.8]
 		\draw[step=1cm, gray, thick,dotted] (0, 0) grid (6,2);
 		
 		\draw[black] (0, 0)--(1, 0)--(2, 1)--(3, 1)--(4, 2)--(5, 1)--(6, 0);
 		\draw[black] (0,0) node {$\bullet$};
 		\draw[black] (1,0) node {$\bullet$};
 		\draw[black] (2,1) node {$\bullet$};
 		\draw[black] (3,1) node {$\bullet$};
 		\draw[black] (4,2) node {$\bullet$};
 		\draw[black] (5,1) node {$\bullet$};
 		\draw[black] (6,0) node {$\bullet$};
                 \draw[black] (3.5,-0.5) node {$\cDes(M)=\{2,4,\red{6}\}$};
                \draw[black] (7.0, 1) node {$\mapsto$};
              \draw[black](7.0, 1.3) node {$\hat\rho$};
 		\end{tikzpicture}
		\begin{tikzpicture}[scale=0.8]
 		\draw[step=1cm, gray, thick,dotted] (0, 0) grid (6,2);
 		\draw[black] (0, 0)--(1, 1)--(2, 1)--(3, 1)--(4, 0)--(5, 1)--(6, 0);
 		\draw[black] (0,0) node {$\bullet$};
 		\draw[black] (1,1) node {$\bullet$};
 		\draw[black] (2,1) node {$\bullet$};
 		\draw[black] (3,1) node {$\bullet$};
 		\draw[black] (4,0) node {$\bullet$};
 		\draw[black] (5,1) node {$\bullet$};
 		\draw[black] (6,0) node {$\bullet$};
                 \draw[black] (3.5,-0.5) node {$\cDes(M')=\{1,3, 5\}$};
 		\end{tikzpicture}
		\caption{Case\,3(b):\,\,\,\,\,\,$\hat\rho(M)=M'$}\label{2edcase3a}		 
	 \end{center}
	 \end{figure}	 

\end{enumerate}

\end{enumerate}
\end{definition}
%


  \begin{lemma}\cite[Observation~1.4]{AER}\label{isomop}
 Let $T_1$ and $T_2$ be finite sets, let $\theta:T_2\mapsto T_1$ be a bijection, and let $\Stat_1:T_1\mapsto 2^{[n-1]}$ and $\Stat_2:T_2\mapsto 2^{[n-1]}$ be maps such that $\Stat_2=\Stat_1\circ \theta$. If $(\cStat_1, \phi_1)$ is a cyclic extension of $\Stat_1$, then $(\cStat_2, \phi_2)$ with $\cStat_2=:\cStat_1\circ\theta$ and $\phi_2:=\theta^{-1}\circ\phi_1\circ\theta$ is a cyclic extension of $\Stat_2$.
 
 \end{lemma}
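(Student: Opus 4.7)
The plan is to verify directly the three axioms of Definition~\ref{def:cDes} for the pair $(\cStat_2, \phi_2)$, transporting each axiom from the hypothesis that $(\cStat_1, \phi_1)$ is a cyclic extension of $\Stat_1$. First I would observe that $\phi_2 = \theta^{-1}\circ\phi_1\circ\theta$ is a bijection on $T_2$, since it is a composition of three bijections.

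For the extension axiom, I would unfold the definition $\cStat_2 = \cStat_1\circ\theta$, apply the extension axiom for $(\cStat_1,\phi_1)$, and then invoke the hypothesis $\Stat_2 = \Stat_1\circ\theta$ to conclude $\cStat_2(F)\cap[n-1] = \cStat_1(\theta F)\cap[n-1] = \Stat_1(\theta F) = \Stat_2(F)$ for every $F\in T_2$. For equivariance, the key identity is $\theta\circ\phi_2 = \phi_1\circ\theta$, which follows immediately from the definition of $\phi_2$; with this in hand, $\cStat_2(\phi_2 F) = \cStat_1(\theta\phi_2 F) = \cStat_1(\phi_1\theta F) = \cStat_1(\theta F)+1 = \cStat_2(F)+1 \pmod n$, where the penultimate equality uses equivariance of $(\cStat_1,\phi_1)$. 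The non-Escher axiom is immediate: since $\cStat_2(F) = \cStat_1(\theta F)$ and $\theta F\in T_1$, the strict inclusions $\varnothing \subsetneq \cStat_2(F) \subsetneq [n]$ inherit directly from the non-Escher property of $\cStat_1$.

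This lemma is a pure transport-of-structure statement along the bijection $\theta$, so there is no genuine obstacle; the proof reduces to a short diagram chase in which each axiom is pushed through $\theta$. The only point requiring a moment of care is checking that the conjugation $\phi_2 = \theta^{-1}\phi_1\theta$ telescopes correctly in the equivariance step, but this is a one-line computation.
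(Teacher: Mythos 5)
Your proof is correct: the three axioms of Definition~\ref{def:cDes} transport along $\theta$ exactly as you describe, and the computation $\cStat_2(\phi_2 F)=\cStat_1(\phi_1\theta F)=\cStat_1(\theta F)+1=\cStat_2(F)+1$ is the whole content of the equivariance step. The paper itself gives no proof (it cites the statement as Observation~1.4 of \cite{AER}), and your argument is precisely the standard transport-of-structure verification that the citation is standing in for.
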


Using the bijection $\varphi$ in Lemma~\ref{bijection:Mot}, it is easy to check that 
$\cDes=\widehat{\cDes}\circ \varphi^{-1}$ and $\rho=\varphi\circ \hat\rho\circ \varphi^{-1}$. By Lemma~\ref{isomop} and Corollary~\ref{coro:cde}, we obtain the following equivalent cyclic extension.

\begin{theorem}\label{2edcdesmotz}
 The pair $(\widehat\cDes, \hat\rho)$, with ${\widehat\cDes}$ and $\hat\rho$ determined by Definition~\ref{def2:Mcdes} and Definition~\ref{def:rho1}, respectively, is a cyclic extension of ${\widehat\Des}$ on $\M^*_n$.
\end{theorem}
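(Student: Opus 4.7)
The plan is to derive Theorem~\ref{2edcdesmotz} from Corollary~\ref{coro:cde} via the transfer principle of Lemma~\ref{isomop}, using the descent-preserving bijection $\varphi:\M^*_n\to\M^*_n$ from Lemma~\ref{bijection:Mot}(1) as the intertwiner. Concretely, I would set $T_1=T_2=\M^*_n$, $\Stat_1=\Des$, $\Stat_2=\widehat{\Des}$, $\theta=\varphi$, and take $(\cStat_1,\phi_1)=(\cDes,\rho)$ from Corollary~\ref{coro:cde}. Lemma~\ref{bijection:Mot}(1) gives $\widehat{\Des}=\Des\circ\varphi$, so Lemma~\ref{isomop} immediately yields a cyclic extension of $\widehat{\Des}$ whose two components are $\cDes\circ\varphi$ and $\varphi^{-1}\circ\rho\circ\varphi$. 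The theorem then reduces to the two identities
\[
\widehat{\cDes}=\cDes\circ\varphi,\qquad \hat\rho=\varphi^{-1}\circ\rho\circ\varphi,
\]
i.e., matching the transported cyclic extension against the explicit $\widehat{\cDes}$ and $\hat\rho$ from Definitions~\ref{def2:Mcdes} and~\ref{def:rho1}.

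For the first identity, both sides already agree on $[n-1]$ by the descent-preserving property of $\varphi$, so it suffices to verify that $n\in\widehat{\cDes}(M)$ if and only if $n\in\cDes(\varphi(M))$. Recall that $\varphi$ preserves the positions of every $\su$, preserves the positions of the descents $\sl\sd\mapsto\sd\sl$, and inside each intermediate $\sd^a\sl^b$ block reverses it to $\sl^b\sd^a$. In particular, the first step of $M$ and of $\varphi(M)$ always coincide, and the last step of $\varphi(M)$ is $\sd$ exactly when the step preceding the rightmost return of $M$ is not $\sl$. A brief case split on the first step of $M$ (being $\sl$ or $\su$) then shows that the two conditions of Definition~\ref{def2:Mcdes} map bijectively onto the two conditions of Definition~\ref{def:Mcdes}, yielding $\widehat{\cDes}=\cDes\circ\varphi$.

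The main obstacle is the second identity $\varphi\circ\hat\rho=\rho\circ\varphi$, which I would establish by going through each of the six subcases 1(a),\,1(b),\,2(a),\,2(b),\,3(a),\,3(b) of Definition~\ref{def:rho1}. In every subcase, write $M$ in the prescribed decomposition (e.g.\ $M=\sp\sl^i\su\sl^k\sp'\sd\sl^j$ in case~3); then describe $\varphi(M)$ explicitly by applying the reversal rule to each maximal $\sd^a\sl^b$ block, noting in particular how the trailing $\sl^j$ (or $\sl^j$ after an inner $\sl^k$) is absorbed into the preceding $\sd$-run. Case~1 and case~3 of Definition~\ref{def:rho1} should land in case~2 of Definition~\ref{def:motzrho} (the last step of $\varphi(M)$ is $\sd$), while case~2 should land in case~1 of Definition~\ref{def:motzrho} (the last step of $\varphi(M)$ is $\sl$). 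In each instance one compares the Motzkin path obtained by first applying $\hat\rho$ and then $\varphi$ against the one obtained by applying $\varphi$ first and then $\rho$, confirming they coincide. The bookkeeping is routine but unavoidable because each subcase of $\hat\rho$ corresponds to a distinct configuration of the tail of $\varphi(M)$; once done, Lemma~\ref{isomop} delivers $(\widehat{\cDes},\hat\rho)$ as a cyclic extension of $\widehat{\Des}$ on $\M^*_n$ for all $n\ge 3$, completing the proof.
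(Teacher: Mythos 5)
Your proposal is correct and follows essentially the same route as the paper: the paper also derives Theorem~\ref{2edcdesmotz} from Corollary~\ref{coro:cde} by checking $\cDes=\widehat{\cDes}\circ\varphi^{-1}$ and $\rho=\varphi\circ\hat\rho\circ\varphi^{-1}$ (the inverse form of your two identities) and then invoking Lemma~\ref{isomop}. You simply spell out more of the case-by-case verification that the paper dismisses as ``easy to check.''
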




\section{Cyclic descents of Motzkin paths in terms of standard Young
tableaux }\label{sec:SYT}

Another important family of combinatorial objects, called standard Young tableaux (SYT), have a well-studied notion of descent set.
Recall that the Young diagram corresponding to a partition $\lambda=(\lambda_1,\ldots, \lambda_k)$ with $\lambda_1\geq\cdots\geq\lambda_k$ consists of left-justified rows so that the $i$-th ($1\leq i\leq k$) row from the top contains $\lambda_i$ cells.
Let $\lambda/\mu$ denote a skew shape, where $\lambda$ and $\mu$ are partitions such that
the Young diagram of $\mu$ is contained in that of $\lambda$,
see Figure~\ref{fig:SYT} for an example.

The {\em standard Young tableaux} of skew shape $\lambda/\mu$ are the labelings of
the cells of the diagram $\lambda/\mu$ with positive integers from $1$ to the number of cells,
such that each number appears once, row entries are increasing from north to south,
and column entries are increasing from west to east.
Let $\SYT(\lambda/\mu)$ denote the set of standard Young tableaux of
shape $\lambda/\mu$. For a straight shape $\lambda=(\lambda_1,\lambda_2,\dots)$, we write $\SYT(\lambda_1,\lambda_2,\dots)$ instead of $\SYT((\lambda_1,\lambda_2,\dots))$ for simplicity.
The {\em descent set} of $T \in \SYT(\lambda/\mu)$ is
\[
\Des(T) := \{i\in[n-1] \,:\, i+1 \text{ is in a lower row than $i$ in $T$}\},
\]
where $n$ is the size of skew shape. 
For example, the descent set of the SYT in Figure~\ref{fig:SYT} is $\{1,4,7,8\}$.

\begin{figure}[htb]
$$\young(::147,2368,59)$$
\caption{A SYT of shape $(5,4,2)/(2)$.} 
\label{fig:SYT}
\end{figure}

%

We introduce the Sch\"utzenberger promotion operator.
\begin{definition}\label{def:pro}
    The action of the {\em promotion} operator
    on $T\in \SYT(\lambda/\mu)$ is as follows.
    Add $1 \pmod n$ to each entry of $T$;
    this turns the $n$ into a $1$ and increments the remaining entries.
    Repeatedly swap $1$ with the larger of its northern neighbor (if it exists)
    and its western neighbor (if it exists), until the cell containing $1$
    has neither northern nor western neighbor.

\end{definition}
It is not difficult to show that the resulting tableau is standard.
Let $\pro$ be the (Sch\"utzenberger) promotion operator\footnote{In our notation, the definition of promotion is consistent with that in the literature~\cite{Huang, Rhoades}.}. 

 Recall that the skew shape $(n-k, n-k, n-2k)/(n-2k, n-2k)\, (0<k\leq \lfloor n/2\rfloor)$ has the following form 
  \begin{center}
\ytableausetup
{mathmode, boxsize=1.5em}
\begin{ytableau}
\none & \none & \none & \none & \none & \space & \space & \space & \none[\dots] & \space & \space\\
\none & \none & \none & \none & \none & \space & \space &  \space & \none[\dots] & \space & \space\\
\space & \space  & \none[\dots] & \space &  \space & \none &\none &\none &\none &\none
\end{ytableau}. 
 \end{center}

\begin{lemma}\label{bijection:motzsyt}
There exists a bijection 
$$\Gamma: \M_n\mapsto \sum_{0\leq k\leq \lfloor n/2\rfloor}\SYT(n-k, n-k, n-2k)/(n-2k, n-2k)$$ 
with the property that  
${\Des}(M)=\Des(\Gamma(M))\,\,\,\, {\forall}\,\,\,\, M\in\M_n.$
\end{lemma}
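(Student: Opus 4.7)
The plan is to define $\Gamma$ by a direct ``sorting by step type'' construction. Given $M\in\M_n$ with $k$ up steps (hence $k$ down steps and $n-2k$ horizontal steps, where $0\le k\le\lfloor n/2\rfloor$), form a filling $\Gamma(M)$ of the skew shape $(n-k,n-k,n-2k)/(n-2k,n-2k)$ by placing the indices of the $\su$ steps in the top row, the indices of the $\sd$ steps in the middle row, and the indices of the $\sl$ steps in the bottom row, in each case written left to right in increasing order. The top two rows each have exactly $k$ cells and the bottom row has $n-2k$ cells, matching the multiplicities of $\su$, $\sd$, $\sl$ in $M$, so the sizes agree.

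To show that $\Gamma(M)$ is standard, note that each row is strictly increasing by construction, so only the column condition requires checking. Comparing the shapes, row $3$ ends at column $n-2k$, whereas rows $1$ and $2$ begin at column $n-2k+1$, so the only columns containing two cells are the rightmost $k$ columns, where rows $1$ and $2$ overlap. It therefore suffices to verify that the position of the $j$-th $\su$ step in $M$ is strictly less than the position of the $j$-th $\sd$ step for every $1\le j\le k$. This is exactly the defining property of a Motzkin path: the height is always nonnegative, which forces the $j$-th $\su$ to occur before the $j$-th $\sd$.

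The inverse map is transparent: read an SYT $T$ of the given shape and assign step type $\su$, $\sd$, or $\sl$ to index $i$ according to whether $i$ lies in the first, second, or third row of $T$. The column condition on $T$ guarantees that at every prefix the number of $\su$'s is at least the number of $\sd$'s, so the resulting word is a bona fide Motzkin path. Since the two procedures are manifestly inverse, $\Gamma$ is a bijection. For the descent statement, observe that in $\Gamma(M)$ the row containing $i$ records the type of the $i$-th step of $M$, with row index increasing as $\su\to\sd\to\sl$. Hence $i\in\Des(\Gamma(M))$, meaning $i+1$ lies in a strictly lower row than $i$, if and only if the ordered pair (type of step $i$, type of step $i+1$) is one of $(\su,\sd)$, $(\su,\sl)$, or $(\sd,\sl)$, which are exactly the three cases of Definition~\ref{def:motzdes}.

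I do not anticipate any serious obstacle: the only nontrivial verification is the column-strictness between the first two rows, and this is an immediate translation of the nonnegativity condition defining Motzkin paths. The degenerate cases $k=0$ (where rows $1$ and $2$ are empty and $\Gamma(\sl^n)$ is the unique one-row SYT) and $k=n/2$ (where row $3$ is empty and $\Gamma$ restricts to the known bijection between Dyck paths and SYT of two-row rectangular shape) are covered uniformly by the same construction.
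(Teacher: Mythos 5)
Your construction is exactly the one used in the paper: place index $i$ in row $1$, $2$, or $3$ according to whether the $i$-th step is $\su$, $\sd$, or $\sl$, and read off descents from the row order. Your write-up is correct and in fact supplies the details (the column-strictness via the ballot property, the shape count, and the explicit inverse) that the paper's proof leaves as ``easy to see.''
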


\begin{proof}
 
 For $M\in\M_n$, let $k$ be the number of $\su$ steps (equivalently, $\sd$ steps) in $M$. Define $\Gamma\,(M)$ by  inserting the entries from $1$ to $n$ into the shape $\SYT((n-k, n-k, n-2k)/(n-2k, n-2k))$ as follows:
insert $i$ in the first (resp. second, third) row if the $i$th step is a $\su$ step (resp. $\sd$ step, $\sl$ step),
 It is easy to see that $\Gamma$ is a bijection. 
 By Definition~\ref{def:motzdes} $i\in{\Des}(M)$ if and only if $i+1$ is in a lower row than $i$ in $\Gamma(M)$, namely $i\in\Des(\Gamma(M))$. 
 This completes the proof.
 \end{proof}
 
 An important concept is the {\it jeu de taquin} (jdt) construction; see \cite[Appendix A1.2]{EC2} for a detailed description.
For any SYT $T$ of skew shape, denote by $\Jdt(T)$
the SYT of straight shape obtained from $T$ by performing a sequence of jdt slides.
 Jeu de taquin has the following well-known property.

\begin{lemma}{\cite[p.431]{EC2}}\label{lem:jdtDes}
For any SYT $T$ of skew shape,
\[
\Des(\Jdt(T))=\Des(T).
\]
\end{lemma}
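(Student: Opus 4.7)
The plan is to reduce the statement to the case of a single jeu de taquin slide, since $\Jdt$ is by definition an iterated composition of such slides, each of which turns a skew SYT with one fewer cell of $\mu$ into another skew SYT. So it suffices to show that if $T'$ is obtained from $T$ by one jdt slide, then $\Des(T') = \Des(T)$; the full claim then follows by induction on the number of slides.

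Fix a jdt slide, and let the \emph{slide path} $P = (c_1, c_2, \ldots, c_m)$ be the sequence of cells traversed by the empty cell, where $c_1$ is an inner corner and $c_m$ is an outer corner. For $j \geq 2$, let $a_j$ denote the entry originally in cell $c_j$; after the slide, $a_j$ occupies cell $c_{j-1}$. Entries outside $P$ are fixed in place, so they contribute the same to the descent set before and after. Moreover, because at each step of the slide one chooses the smaller of the two candidates (the entry immediately south or immediately east of the current empty cell), the values satisfy $a_2 < a_3 < \cdots < a_m$ along the path.

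I would then verify descent preservation by checking, for each $i \in [n-1]$, whether $i+1$ lies in a strictly lower row than $i$ both before and after the slide. The case where neither $i$ nor $i+1$ lies on $P$ is immediate. When $i$ and $i+1$ lie on $P$ but are not of the form $(a_j,a_{j+1})$ for consecutive $j$, the monotonicity $a_2 < \cdots < a_m$, together with the fact that each move shifts an entry either one row north or one column west, ensures that the row-comparison between $i$ and $i+1$ is unchanged. The main obstacle is the case $\{i, i+1\} = \{a_j, a_{j+1}\}$, where $i$ and $i+1$ occupy adjacent cells of $P$; here a small case analysis on the type of move (north versus west) at positions $j$ and $j+1$ is needed to confirm that the relative vertical position of the two entries is preserved.

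An alternative, possibly slicker, route is to invoke the fact that a single jdt slide alters the reading word only by a sequence of elementary Knuth transformations, combined with the fact that the descent set of a (skew) SYT is recoverable from its reading word in a Knuth-invariant fashion. This sidesteps the path case analysis at the cost of appealing to more machinery.
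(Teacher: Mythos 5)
The paper offers no proof of this lemma at all: it is quoted directly from Stanley \cite[p.~431]{EC2}. So your attempt is really being measured against the standard literature argument. Your ``alternative route'' --- a jdt slide changes the reading word only by Knuth transformations, and $\Des(T)$ equals the set of $i$ such that $i+1$ precedes $i$ in the reading word, which is a Knuth-class invariant because it equals $\Des$ of the common insertion tableau --- is essentially that standard proof and is sound, though you only gesture at the two ingredients rather than establishing them.

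Your primary route, by contrast, has a genuine gap in the case analysis. After reducing to a single slide with hole path $c_1,\dots,c_m$ and moved entries $a_2<\cdots<a_m$, the relevant configurations are: neither of $i,i+1$ on the path, \emph{exactly one} on the path, or both on the path. The middle case you list (``both on $P$ but not of the form $(a_j,a_{j+1})$'') is vacuous, since the $a_j$ are strictly increasing and $i,i+1$ are consecutive integers, so if both lie on the path they occupy consecutive path positions. Meanwhile the case you never mention --- exactly one of $i,i+1$ on the path --- is where the real content of the lemma lives. For example, if $i=a_j$ moves one row north while $i+1$ stays put, the descent status of $i$ changes precisely when $i+1$ sits in $i$'s original row (immediately east of $c_j$); excluding this requires the slide rule (the entry east of the hole was not chosen, hence exceeds $a_j$) combined with column-strictness and the fact that $\lambda$ and $\mu$ are Young diagrams, and a symmetric argument is needed when it is $i+1$ that moves north past a stationary $i$. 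Without this case the induction on slides does not close. (The case $\{i,i+1\}=\{a_j,a_{j+1}\}$ that you do flag needs the same kind of shape argument to rule out the configuration where $c_{j-1}$ is west of $c_j$ and $c_{j+1}$ is south of it, but at least there you acknowledge work remains.)
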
 
 
 Combining 
 Lemma~\ref{bijection:motzsyt} and Lemma~\ref{lem:jdtDes}, we obtain the following theorem.
 \begin{proposition}\label{prop:Mdes}
There exists a bijection $\tilde{\Gamma}:= \Jdt\circ \Gamma$, which maps the set of Motzkin paths  $\M_n$ to the set of SYT of straight shape of size $n$ and at most $3$ rows, 
with the property that  
$$\Des(M)=\Des(\tilde\Gamma(M))\,\,\,\, {\forall}\,\,\,\, M\in\M_n.$$

 \end{proposition}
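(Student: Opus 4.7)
The plan is to prove the proposition in two independent pieces: first the descent identity, which chains the two earlier lemmas; and then bijectivity, which follows from a cardinality match combined with injectivity. For descent preservation, Lemma~\ref{bijection:motzsyt} gives $\Des(M)=\Des(\Gamma(M))$ and Lemma~\ref{lem:jdtDes} gives $\Des(\Gamma(M))=\Des(\Jdt(\Gamma(M)))$; chaining these yields the desired equality. It also needs to be verified that $\tilde\Gamma(M)\in T_3(n)$, i.e.\ has at most three rows; this is automatic because rectification of an SYT of shape $\lambda/\mu$ produces a straight shape $\nu\subseteq\lambda$.

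For bijectivity, note that both $\M_n$ and $T_3(n)$ have cardinality the $n$-th Motzkin number---the former by definition, the latter by Regev's theorem recalled in the introduction. Hence it suffices to prove that $\tilde\Gamma$ is injective, and since $\Gamma$ is a bijection this reduces to injectivity of $\Jdt$ on $\bigsqcup_{0\le k\le\lfloor n/2\rfloor}\SYT((n-k,n-k,n-2k)/(n-2k,n-2k))$. The plan is to argue this in two steps. Step one: for a fixed $k$, the skew shape $(n-k,n-k,n-2k)/(n-2k,n-2k)$ splits geometrically into a $2\times k$ rectangle (in rows $1$--$2$, columns $n-2k+1$ through $n-k$) and a horizontal strip of length $n-2k$ (in row $3$, columns $1$ through $n-2k$), and these two pieces occupy disjoint rows and disjoint columns. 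This yields the Schur factorization $s_{\lambda/\mu}=s_{(k,k)}\cdot s_{(n-2k)}$, to which Pieri's rule applies: the expansion is $\sum_\nu s_\nu$ over partitions $\nu$ obtained from $(k,k)$ by adding a horizontal strip of length $n-2k$, and a short case check identifies these as exactly the shapes $\nu=(k+a,k,c)$ with $a+c=n-2k$, $a\ge 0$ and $0\le c\le k$, each of multiplicity one. By the standard Littlewood--Richardson interpretation of rectification, the number of skew SYT of shape $\lambda/\mu$ rectifying to a given straight SYT of shape $\nu$ equals $c^{\lambda}_{\mu,\nu}$, so the bound $c^{\lambda}_{\mu,\nu}\le 1$ forces $\Jdt$ to be injective on $\SYT(\lambda/\mu)$.

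Step two is short: every shape $\nu$ appearing in the Pieri expansion for a given $k$ satisfies $\nu_2=k$, so $k$ is recovered from the shape of $\tilde\Gamma(M)$, and rectifications coming from distinct values of $k$ therefore cannot collide. Together, steps one and two establish injectivity of $\tilde\Gamma$, and the cardinality match closes the argument. The hardest part is step one, specifically the Pieri bookkeeping: one must verify carefully that a horizontal strip added to $(k,k)$ cannot introduce new cells in row $2$ (otherwise the no-two-in-a-column condition, combined with $\nu_1\ge\nu_2$, would fail, forcing $b=0$ in the notation $\nu=(k+a,k+b,c)$). A pedestrian alternative to the Pieri route would be to construct the inverse of $\tilde\Gamma$ directly via reverse jeu de taquin into the skew shape determined by $k=\nu_2$, followed by $\Gamma^{-1}$; either approach should work, but the Pieri/LR route is shorter.
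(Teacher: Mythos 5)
Your proposal is correct, and for the descent identity it does exactly what the paper does: chain Lemma~\ref{bijection:motzsyt} with Lemma~\ref{lem:jdtDes}. Where you genuinely diverge is on bijectivity. The paper's entire proof is the single sentence ``Combining Lemma~\ref{bijection:motzsyt} and Lemma~\ref{lem:jdtDes}, we obtain the following theorem,'' which leaves implicit both that the image of $\tilde\Gamma$ lies in $T_3(n)$ and, more seriously, that $\Jdt$ is injective on the disjoint union of the shapes $(n-k,n-k,n-2k)/(n-2k,n-2k)$. The latter is not automatic: jeu de taquin on a single skew shape fails to be injective exactly when some Littlewood--Richardson coefficient $c^{\lambda}_{\mu\nu}$ exceeds $1$, since that coefficient counts the skew SYT of shape $\lambda/\mu$ rectifying to a fixed standard tableau of shape $\nu$. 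Your argument --- the geometric splitting gives $s_{\lambda/\mu}=s_{(k,k)}\,s_{(n-2k)}$, Pieri makes the expansion multiplicity-free with every $\nu$ of the form $(k+a,k,c)$ so that $k=\nu_2$ separates the images for distinct $k$, the containment $\nu\subseteq\lambda$ bounds the number of rows by $3$, and Regev's count $|T_3(n)|=|\M_n|$ upgrades injectivity to bijectivity --- is therefore a genuine and welcome addition rather than a restatement; the Pieri bookkeeping (no added cells in row $2$, hence $0\le c\le k$) also checks out. What the Pieri/LR route costs is machinery; your suggested alternative of exhibiting the inverse directly by reverse jeu de taquin into the skew shape determined by $k=\nu_2$ would be closer in spirit to the paper's bijective style, but either way you have supplied a justification that the paper leaves entirely to the reader.
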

 
 For the $\SYT$ of shape $(n-k, n-k, n-2k)/(n-2k, n-2k)$, define the cyclic descent map as follows. 
 
 \begin{definition}\label{def:cDes3row}
For $T\in \SYT((n-k, n-k, n-2k)/(n-2k, n-2k))\, (0<k\leq \lfloor n/2\rfloor)$, let $n\in\cDes(T)$ if and only if either of the following conditions
holds:
\begin{enumerate}
\item $1$ and $n$ are in the third row and second row of $T$, respectively.
 \item $1$ and $n$ are in the first row and second row of $T$, respectively, and for every $1<i<k$,  $T_{2,i-1} > T_{1,i}$, where $T_{i, j}$ denotes the entry in row $i$ (from the top) and column $j$ (from the left) of $T$.
\end{enumerate}
\end{definition}

\begin{example} According to Definition~\ref{def:cDes3row}, $6\in\cDes\left(\young(::25,::46,13)\right)$ and $6\in\cDes\left(\young(::13,::46,25)\right)$.
\end{example}

 \begin{proposition}\label{prop:comm}
For $n\geq 1$, we have

\begin{equation}\label{eq:commu}
\pro\circ\Gamma=\Gamma\circ{\rho}.
\end{equation}

\end{proposition}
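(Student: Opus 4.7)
The plan is to verify $\pro(\Gamma(M))=\Gamma(\rho M)$ by tracking the trajectory of the entry $1$ after the renumbering step $n\mapsto 1$ of the promotion operator, and matching the resulting tableau with $\Gamma(\rho M)$ in each of the two cases of Definition~\ref{def:motzrho}. I first record two pieces of setup. If $u_1<\dots<u_k$, $d_1<\dots<d_k$, and $l_1<\dots<l_{n-2k}$ are the $\su$-, $\sd$-, $\sl$-positions of $M$, these fill rows $1$, $2$, $3$ of the skew shape in increasing order, with column-strictness between rows~$1$ and~$2$ coming from the Motzkin inequality $u_j<d_j$. Geometrically, row~$3$ occupies absolute columns $1,\dots,n-2k$ while rows~$1$ and~$2$ occupy absolute columns $n-2k+1,\dots,n-k$, so every row-$3$ cell carries no northern neighbor inside the SYT, and the leftmost cells of rows~$1$ and~$2$ carry no western neighbor.

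For Case~$1$ of Definition~\ref{def:motzrho} ($M=\sp\sl$, so $l_{n-2k}=n$), the renumbering places $1$ at the right end of row~$3$. Since row-$3$ cells have no northern neighbors, the slide is purely leftward along row~$3$: $1$ ends at the leftmost cell of row~$3$, the remaining row-$3$ entries are shifted one column rightward, and rows~$1,2$ only undergo the initial $+1$ shift. Row by row, the result agrees with $\Gamma(\sl\sp)=\Gamma(\rho M)$, since prepending $\sl$ to $\sp$ puts $1$ at the leftmost row-$3$ cell and shifts every other step position of $M$ by one. The degenerate subcase $n-2k=1$ requires no slide.

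For Case~$2$ ($M=\sp\su\sp'\sd$), set $p=|\sp|$, let $a$ (resp.~$b$) be the number of $\su$-steps in $\sp$ (resp.~$\sp'$), so that $k=a+b+1$, and let $\alpha_i,\gamma_i$ (resp.~$\beta_i,\delta_i$) be the $\su$-, $\sd$-positions inside $\sp$ (resp.~$\sp'$). The distinguished $\su$ occupies $(1,a+1)$, the final $\sd$ occupies $(2,k)$, and after renumbering the entry $1$ sits at $(2,k)$. The slide proceeds in three phases:
\begin{enumerate}
\item For each $j$ with $k\ge j\ge a+2$, the western neighbor of $(2,j)$ equals $\delta_{j-a-1}+p+2$ and the northern neighbor equals $\beta_{j-a-1}+p+2$; the Motzkin inequality $\beta_{j-a-1}<\delta_{j-a-1}$ inside $\sp'$ makes the western neighbor larger, so $1$ slides leftward along row~$2$ to $(2,a+1)$.
\item At $(2,a+1)$ the northern entry is $p+2$ (the image of the distinguished $\su$), while the western entry, when present, is $\gamma_a+1\le p+1<p+2$; hence $1$ moves up to $(1,a+1)$.
\item At $(1,a+1)$ there is no northern neighbor, so $1$ slides leftward along row~$1$ until it stops at $(1,1)$, where both neighbors vanish.
\end{enumerate}
Reading off the resulting tableau and comparing with $\rho M=\su\sp\sd\sp'$, row~$1$ becomes $1,\alpha_1+1,\dots,\alpha_a+1,\beta_1+p+2,\dots,\beta_b+p+2$, row~$2$ becomes $\gamma_1+1,\dots,\gamma_a+1,p+2,\delta_1+p+2,\dots,\delta_b+p+2$, and row~$3$ is unchanged after the initial renumbering, which is exactly $\Gamma(\rho M)$ entry by entry.

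The main obstacle will be the careful handling of the degenerate subcases $a=0$ or $b=0$, where some neighbors in the trajectory above do not exist; in each case, the slide simply shortens (the row-$2$ leftward phase is empty when $b=0$ and $1$ proceeds directly up; the row-$1$ leftward phase is empty when $a=0$ and $1$ halts at $(1,1)$ immediately after moving up), and the same entry-by-entry comparison against $\Gamma(\rho M)$ closes the argument.
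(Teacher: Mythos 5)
Your proposal is correct and follows essentially the same route as the paper's proof: a case-by-case verification, over the two cases of Definition~\ref{def:motzrho}, that applying promotion to $\Gamma(M)$ reproduces $\Gamma(\rho M)$. The paper only sketches this (working out Case~1 and asserting the other case is similar), whereas you carry out the full trajectory of the entry $1$ under the promotion slides, including the degenerate subcases; the details you supply are consistent with the paper's argument.
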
  

\begin{proof}
For $M\in\M_n$, 
combining the definition of $\Gamma$ in the proof of Lemma~\ref{bijection:motzsyt} and that of $\pro$ in Definition~\ref{def:pro}, it is easy to check that Eq. \eqref{eq:commu} holds for all the cases in Definition~\ref{def:motzrho}. For example, in case $1$,  for $M=\sp\sl$ and ${\rho}(M)=\sl\sp$, the entries $T_{i, j}+1$ of $\Gamma({\rho}(M))$ from the segment $\sp$ lie in the $j$th ($1\leq j\leq 3$) row if the enries $T_{i,j}$ of $\Gamma(M)$ from the segment $\sp$ lie in the $j$th row. 
Since the entry $n$ of $\Gamma(M)$ lies in the third row and the entry $1$ of $\Gamma(\rho(M))$ lies in the third row,  it is easy to check that the $\pro\circ\Gamma(T)$ coincides with $\Gamma({\rho}(M))$.

\end{proof}
 
Our cyclic extension for $\SYT$ of shape $(n-k, n-k, n-2k)/(n-2k, n-2k)$ is stated as follows.
 
\begin{theorem}\label{the:3rows}
For each $0<k\leq \lfloor n/2\rfloor$, $\Des$ on $\SYT((n-k, n-k, n-2k)/(n-2k, n-2k))$ has a cyclic extension $(\cDes, \phi)$. The map $\cDes$ is defined by Definition~\ref{def:cDes3row}. The bijection $\phi:\SYT((n-k, n-k, n-2k)/(n-2k, n-2k))\mapsto\SYT((n-k, n-k, n-2k)/(n-2k, n-2k))$ is defined by $\phi:=\pro$.
	\end{theorem}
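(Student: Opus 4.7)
The plan is to transport the cyclic extension from Motzkin paths to $\SYT$ through the bijection $\Gamma$ of Lemma~\ref{bijection:motzsyt}, using the isomorphism principle in Lemma~\ref{isomop}. For each $1 \le k \le \lfloor n/2 \rfloor$, the bijection $\Gamma$ restricts to a bijection from $\M_{n, n-2k}$ onto $\SYT((n-k, n-k, n-2k)/(n-2k, n-2k))$; since $k \ge 1$, the excluded path $\sl^n$ does not lie in $\M_{n, n-2k}$, so $\M_{n, n-2k} \subseteq \M^*_n$. By Theorem~\ref{cdesmotz}, $(\cDes, \rho)$ is a cyclic extension of $\Des$ on $\M_{n, n-2k}$, and by Lemma~\ref{bijection:motzsyt} the two descent maps (on Motzkin paths and on $\SYT$) correspond under $\Gamma$. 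Applying Lemma~\ref{isomop} with $\theta := \Gamma^{-1}$ will then produce the cyclic extension $(\cDes \circ \Gamma^{-1},\, \Gamma \circ \rho \circ \Gamma^{-1})$ of the $\SYT$ descent map, and the entire argument reduces to identifying this pair with $(\cDes, \pro)$ from the theorem statement.

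For the bijection side, Proposition~\ref{prop:comm} does the job at once: the intertwining relation $\pro \circ \Gamma = \Gamma \circ \rho$ rearranges to $\Gamma \circ \rho \circ \Gamma^{-1} = \pro$, so $\phi = \pro$. For the cyclic descent set side, both $\cDes \circ \Gamma^{-1}$ and the map of Definition~\ref{def:cDes3row} agree with $\Des$ on $[n-1]$ by the extension axiom, so I only need to compare membership of $n$. Under $\Gamma$, the first step of $M$ being $\sl$ (resp.\ $\su$) corresponds to $1$ appearing in row $3$ (resp.\ row $1$) of $\Gamma(M)$, and the final step being $\sd$ corresponds to $n$ appearing in row $2$, which matches the row-placement conditions of Definition~\ref{def:cDes3row}.

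The main obstacle, and the one genuinely non-routine verification, is to translate the condition ``$M$ has no returns before the rightmost $\sd$'' into the inequalities $T_{2, i-1} > T_{1, i}$ appearing in Definition~\ref{def:cDes3row}(2). I would handle this by observing that the $j$-th $\sd$-step, located at position $T_{2, j}$, is a return exactly when the number of $\su$-steps among the first $T_{2, j}$ steps equals $j$, equivalently when $T_{1, j+1} > T_{2, j}$. Requiring this to fail for each $j = 1, \ldots, k-1$ and reindexing with $i := j + 1$ produces precisely the inequalities $T_{2, i-1} > T_{1, i}$, completing the identification of the cyclic descent map and with it the proof.
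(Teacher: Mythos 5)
Your proposal is correct and follows essentially the same route as the paper: transport the cyclic extension $(\cDes,\rho)$ on $\M_{n,n-2k}$ through $\Gamma$ via Lemma~\ref{isomop}, and identify $\Gamma\circ\rho\circ\Gamma^{-1}$ with $\pro$ using Proposition~\ref{prop:comm}. The only difference is that you spell out the step the paper dismisses as ``easy to check,'' namely that $\cDes(\Gamma(M))=\cDes(M)$; your careful translation of ``no returns before the rightmost $\sd$'' in fact yields the inequalities $T_{2,i-1}>T_{1,i}$ for all $i=2,\dots,k$, which indicates that the range $1<i<k$ in Definition~\ref{def:cDes3row} should read $1<i\le k$.
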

 \begin{proof}
 By Definition~\ref{def:Mcdes} and Definition~\ref{def:cDes3row}, it is easy to check that the bijection $\Gamma$ satisfies the property 
 $$\cDes(\Gamma(M))={\cDes}(M)\,\,\, \quad\textit{for}\,\,\,\quad M\in\M_n.$$  
Using Proposition~\ref{prop:comm}, the map $\phi$  
 is the composition of bijections $\Gamma\circ {\rho} \circ \Gamma^{-1}$ described directly in terms of $\SYT$.
 By Lemma~\ref{isomop} and Lemma~\ref{bijection:motzsyt}, we obtain the cyclic descent extension $(\cDes, \phi)$ of $\Des$ on $\SYT((n-k, n-k, n-2k)/(n-2k, n-2k))$. 
 \end{proof}
 
 \begin{remark}
Recall \cite[Corollary~6.4]{Huang}, the cyclic extension $(\cDes, \phi)$ in Theorem~\ref{the:3rows} coincides with the cyclic descent extension in \cite[Theorem~1.9]{Huang}.
 \end{remark}

 \begin{example}
 Below are two of the orbits of the $\ZZ$-actions on \SYT((6-k, 6-k, 6-2k)/(6-2k, 6-2k)) for $k=1,2$  generated by $\phi$, respectively. 
 
\begin{tikzpicture}[scale=2.2]\scriptsize

\node (0) at (0,11) {$\younganddescent{::::1,::::3,2456}{13} \phimapsto$};
\node (1) at (1,11) {$\younganddescent{::::2,::::4,1356}{24} \phimapsto$};
\node (2) at (2,11) {$\younganddescent{::::3,::::5,1246}{35} \phimapsto$};
\node (3) at (3,11) {$\younganddescent{::::4,::::6,1235}{4\red{6}} \phimapsto$};
\node (4) at (4,11) {$\younganddescent{::::1,::::5,2346}{15}\phimapsto$};
\node (5) at (5.0,11) {$\younganddescent{::::2,::::6,1345}{2\red{6}}$};
\draw [->, >=stealth] (4.9,11.42) -- (4.9,11.52) -- node[above]{\scriptsize $\phi$} (-0.1,11.52) -- (-0.1,11.42);

\node (6) at (0,10) {$\younganddescent{::14,::26,35}{124} \phimapsto$};
\node (7) at (1,10) {$\younganddescent{::12,::35,46}{235} \phimapsto$};
\node (8) at (2,10) {$\younganddescent{::23,::46,15}{34\red{6}} \phimapsto$};
\node (9) at (3,10) {$\younganddescent{::14,::35,26}{145} \phimapsto$};
\node (10) at (4,10) {$\younganddescent{::25,::46,13}{25\red{6}}\phimapsto$};
\node (11) at (4.9,10) {$\younganddescent{::13,::56,24}{13\red{6}}$};
\draw [->, >=stealth] (4.9,10.42) -- (4.9,10.52) -- node[above]{\scriptsize $\phi$} (-0.1,10.52) -- (-0.1,10.42);

\end{tikzpicture}
\end{example}

\section*{Acknowledgement}

The author thanks Prof.~Adin and Prof.~Roichman for their helpful suggestions. The author also thanks the anonymous referees for their valuable comments.


\end{document}